\documentclass{amsart}

\usepackage{amsmath, amssymb, amsthm}
\usepackage{mathtools}
\usepackage{mathrsfs}
\usepackage{bbm}
\usepackage{cases}

\usepackage{graphicx} 
\usepackage{svg}
\usepackage{booktabs}
\usepackage{siunitx}
\usepackage{float} 
\usepackage{subfigure} % Note: Consider subfig or subcaption if conflicts occur

\usepackage{url}
\usepackage{enumitem}
\usepackage{color}
\usepackage{epstopdf}

\usepackage{hyperref} 

\usepackage{amsrefs}

\newtheorem{theorem}{Theorem}[section]
\newtheorem{lemma}[theorem]{Lemma}
\newtheorem{corollary}[theorem]{Corollary}
\newtheorem{proposition}[theorem]{Proposition}

\theoremstyle{definition}

\newtheorem{example}[theorem]{Example}

\theoremstyle{remark}
\newtheorem{remark}[theorem]{Remark}

\numberwithin{equation}{section}

\copyrightinfo{2026}{Jin Sun and Philipp S\"{u}rig}

\newcommand{\bracket}[1]{\left( #1\right)}

\newcommand{\av}[1]{\left\vert #1\right\vert}

\newcommand{\R}{\mathbb{R}}

\newcommand{\Lap}{\mathcal{L}}
\newcommand{\grad}{\nabla}

\newcommand{\diver}{\operatorname{div}}

\providecommand{\eat}[1]{}

\begin{document}

\title{Parabolic Frequency for Doubly Nonlinear Equations on Manifolds}
\author{Jin Sun}
    \address{Jin Sun, School of Mathematical Sciences, Fudan University, 200433, Shanghai, China}
    \email{jsun22@m.fudan.edu.cn}
\author{Philipp S\"{u}rig}
    \address{Philipp S\"{u}rig, Universit\"{a}t Bielefeld, Fakult\"{a}t f\"{u}r Mathematik, Postfach 100131, D-33501, Bielefeld, Germany}
    \email{philipp.suerig@uni-bielefeld.de}
\date{}

\begin{abstract}
We establish monotonicity formulas for a parabolic frequency function associated with sign-changing solutions to a class of doubly nonlinear parabolic equations of the form $\partial_t u = \Lap_{p,\varphi} u^q$ on weighted complete Riemannian manifolds without any curvature assumption, where $\Lap_{p,\varphi}$ denotes the weighted $p$-Laplacian and $p>1$, $q>0$. As a consequence, we obtain results on backward uniqueness for $q(p-1)\geq 1$ and unique continuation at infinity for $q(p-1) > 1$. We further consider equations with a controlled nonlinear perturbation term and derive an almost-monotonicity formula for the parabolic frequency. By employing the parabolic frequency, we also establish some Liouville-type results for ancient solutions in the case  $q(p-1)\geq 1$.
\end{abstract}

\let\thefootnote\relax\footnotemark\footnotetext{\textit{\hskip-0.6truecm 2020 Mathematics Subject Classification.} 35K55, 58J35, 35B05. \newline
\textit{Key words and phrases.} Leibenson equation, doubly nonlinear
parabolic equation, parabolic frequency, Riemannian manifold. \newline
The second author was funded by the Deutsche Forschungsgemeinschaft (DFG,
German Research Foundation) - Project-ID 317210226 - SFB 1283.}

\maketitle

\section{Introduction}

The frequency function method was introduced by Almgren \cite{Almgren1979} and systematically developed by Garofalo and Lin \cite{GarofaloLin1986} to study unique continuation for elliptic equations. Lin \cite{Lin1990} extended these techniques to parabolic equations, establishing a uniqueness theorem for solutions of the heat equation. Subsequently, Poon \cite{Poon1996} proved the monotonicity of parabolic frequency for the heat equation with bounded lower-order terms, from which strong unique continuation follows. Ni \cites{Ni2004,Ni2007} developed related entropy and monotonicity formulas on manifolds in connection with Li--Yau--Hamilton estimates. Li and Wang \cite{LiWang2019} obtained almost-monotonicity formulas on compact manifolds with curvature-dependent error terms. Recently, Colding and Minicozzi \cite{ColdingMinicozzi2022} proved parabolic frequency monotonicity on general Riemannian manifolds with the drift Laplacian without curvature assumptions. Baldauf and Kim \cite{BaldaufKim2022} extended frequency monotonicity to Ricci flows. Parabolic frequency is also considered in many other settings, see \cites{BanerjeeGarofalo2018, XSW2023, BHL2024, SunWang2025} for instance.

The doubly nonlinear parabolic equation $\partial_t u = \Delta_{p,\varphi} u^q$, in which both the diffusion operator and the nonlinearity in $u$ contribute to degeneracy or singularity, originates in Leibenson's \cite{Leibenson1945} modeling of turbulent gas filtration through porous media; see \cites{BenediktGirgKotrlaTakac2018,GrigoryanSurig2024} for historical accounts. Barenblatt \cite{Barenblatt1952} constructed the celebrated self-similar solutions on Euclidean spaces. The special case $p=2$ gives the porous medium equation, systematized in V\'{a}zquez \cite{Vazquez2007}, while $q=1$ yields the parabolic $p$-Laplace equation treated by DiBenedetto \cite{DiBenedetto1993}. The homogeneous case $\partial_t(|u|^{p-2}u)=\Delta_p u$, known as Trudinger's equation \cite{Trudinger1968}, has been extensively studied: H\"{o}lder regularity was obtained by Ivanov \cites{Ivanov1997,Ivanov1992} and Porzio--Vespri \cite{PorzioVespri1993}, Alt and Luckhaus \cite{AltLuckhaus1983} developed a general existence framework, and Harnack inequalities were established by Kinnunen--Kuusi \cite{KinnunenKuusi2007} and DiBenedetto--Gianazza--Vespri \cite{DiBenedettoGianazzaVespri2012}. A comprehensive variational approach was developed by B\"{o}gelein, Duzaar, Marcellini, and Scheven \cites{BDM2013, BDMS2018}. However, monotonicity formulas of frequency type for doubly nonlinear equations have not been established, and this is the main contribution of the present paper.

In parallel, Liouville-type theorems for ancient solutions are important in the
qualitative theory of parabolic equations. For the heat equation on complete
noncompact manifolds with nonnegative Ricci curvature, Souplet and Zhang
\cite{SoupletZhang2006} proved that bounded ancient solutions must be constant,
and Lin and Zhang \cite{LinZhang2019} classified ancient solutions of polynomial
growth as polynomials in time.  In the degenerate setting, DiBenedetto,
Gianazza, and Vespri \cites{DiBenedettoGianazzaVespri2010,
DiBenedettoGianazzaVespri2012} established via intrinsic Harnack inequalities
that bounded ancient solutions of the evolutionary $p$-Laplacian are constant.
For the doubly nonlinear equation, B\"{o}gelein, Duzaar, and Liao
\cite{BDL2021} and B\"{o}gelein, Duzaar, Liao, and Sch\"{a}tzler
\cite{BDLS2023} developed H\"{o}lder regularity that provides the foundational regularity framework and obtained a Liouville-type result for $q(p-1)\geq 1$. For bounded domains in Euclidean spaces, Hang and Lin \cite{HangLin1999} proved that for a broad class of elliptic equations, every nontrivial harmonic function has at least exponential growth, and this result was extended to the parabolic setting by Gui \cite{Gui2022}. However, Liouville-type
results for ancient solutions of doubly nonlinear equations on
weighted Riemannian manifolds have not been obtained; this is another
contribution of the present paper.

In this paper, we introduce a novel parabolic frequency function adapted to the doubly nonlinear equation on a weighted Riemannian manifold $(M,g,e^{-\varphi}\,dV_g)$ and establish its monotonicity.  Building on this monotonicity, we derive the strong unique continuation property at infinity for the slow diffusion case.  We further treat equations with a controlled nonlinear perturbation and prove almost-monotonicity of the parabolic frequency. As an application, we obtain a Liouville-type theorem for ancient solutions: in the case $q(p-1)>1$, the doubly nonlinear equation admits only trivial solutions, while in the critical case $q(p-1)=1$, it admits no solutions of polynomial growth. We note that all of our results extend to solutions of the doubly nonlinear equation on any relatively compact domain $\Omega\subset M$ with Dirichlet boundary condition.

Let $(M,g)$ be a complete Riemannian manifold and $\varphi:M\to\R$ be a smooth function. For $p>1$ and $q>0$, the weighted $p$-Laplacian (or drift $p$-Laplacian) is defined by
\[
\Lap_{p,\varphi} v \;=\; e^{\varphi}\,\diver\!\big(e^{-\varphi}\,|\nabla v|^{p-2}\nabla v\big).
\]
We consider sign-changing solutions $u:M\times[a,b]\to\R$ of 
\begin{equation}\label{eq:doublynonlinear}
	\partial_t u = \Lap_{p,\varphi} u^q,
\end{equation}
where $u^q:=|u|^{q-1}u$. 

The prototypical examples for equation~\eqref{eq:doublynonlinear} are the spherically symmetric self-similar solutions in $\R^n$ constructed by G.~I.~Barenblatt~\cite{Barenblatt1952}, now known as \emph{Barenblatt solutions}.

When $q(p-1) > 1$, the Barenblatt solution is given by
	\[
	u(x, t) = \frac{1}{t^{n/\beta}} \left( C - \varkappa \left( \frac{|x|}{t^{1/\beta}} \right)^{\frac{p}{p-1}} \right)_+^{\gamma},
	\]
	where $C > 0$ is any constant, and
	\[
	\beta = p + n[q(p-1) - 1], \quad \gamma = \frac{p-1}{q(p-1)-1}, \quad \varkappa = \frac{q(p-1)-1}{pq} \beta^{-\frac{1}{p-1}}.
	\]

When $q(p-1) = 1$, the Barenblatt solution takes the form
	\[
	u(x, t) = \frac{1}{t^{n/p}} \exp\left(-\zeta \left(\frac{|x|}{t^{1/p}}\right)^{\frac{p}{p-1}}\right),
	\]
	where $\zeta = (p-1)^2 p^{-\frac{p}{p-1}}$.

When $q(p-1) < 1$ and $\beta>0$, we have $\gamma<0$ and $\varkappa < 0$, and the Barenblatt solution becomes
	\[
	u(x, t) = \frac{1}{t^{n/\beta}} \left( C + |\varkappa| \left( \frac{|x|}{t^{1/\beta}} \right)^{\frac{p}{p-1}} \right)^{\gamma}.
	\]

In particular, when $q(p-1) \leq 1$, the solution $u(x,t)>0$ for all $x\in\R^n$ and $t>0$, exhibiting an infinite propagation speed. When $q(p-1) > 1$, the solution $u(x,t)$ is compactly supported for each $t>0$, exhibiting a finite propagation speed. Accordingly, the regime $q(p-1) \leq 1$ is referred to as the \emph{fast diffusion case}, while $q(p-1) > 1$ is the \emph{slow diffusion case}.

For a complete weighted Riemannian manifold $(M,g,e^{-\varphi}\,dV_g)$ of dimension $n$ and for $r>0$, the weighted Lebesgue space $L^{r}_\varphi(M)$ is defined by
\begin{align*}
L^r_\varphi(M) = \left\{ u: M \to \mathbb{R} \text{ measurable} \;\middle|\; \|u\|_{L^r_\varphi(M)} := \left( \int_M |u|^r \, e^{-\varphi} \, \mathrm{d}V_g \right)^{1/r} < \infty \right\},
\end{align*}
and the weighted Sobolev space $W^{1,r}_\varphi(M)$ is defined by  
\begin{align*}
W^{1,r}_\varphi(M) = \bigl\{ u \in L^r_\varphi(M) \;\big|\; \nabla u \text{ exists in the weak sense and } {\nabla u} \in \bracket{L^{r}_\varphi(M)}^n \bigr\},
\end{align*}
equipped with the norm
\begin{align*}
    \|u\|_{W^{1,r}_\varphi(M)} := \left( \int_M |u|^r \, e^{-\varphi} \, \mathrm{d}V_g +\int_M |\grad u|^r \, e^{-\varphi} \, \mathrm{d}V_g \right)^{1/r}.
\end{align*}
We denote by $W^{1,r}_{0;\varphi}(M)$ the closure of $C^{\infty}_c(M)$ in $W^{1,r}_\varphi(M)$.

Throughout the paper, we impose the following standing assumption to ensure that the parabolic frequency is well defined and that integration by parts is justified: for each $t\in[a,b]$ ($-\infty\leq a < b\leq +\infty$),
\begin{equation}\label{E:Assumption}
    u,u_t\in W^{1,q+1}_{0;\varphi}(M);\; \grad u^q\in \bracket{L^{p}_{\varphi}(M)}^n.
\end{equation}
This assumption is satisfied, for instance, when $ \int_M  \, e^{-\varphi} \, \mathrm{d}V_g<\infty$, the functions $u$, $u_t$, and $\grad u^q$ are bounded, and $ \lim_{R\to+\infty}\int_{\partial B_R}  \, e^{-\varphi} = 0$  for geodesic balls $B_R$ centered at a fixed point $o\in M$.

For a solution $u$ of \eqref{eq:doublynonlinear}, we define the weighted energies
\[
I(t)=\int_M u^{q+1}\,e^{-\varphi}\,dV_g,\qquad
D(t)=-\int_M |\nabla (u^q)|^p\,e^{-\varphi}\,dV_g,
\]
and the \emph{parabolic frequency}
\[
N(t)=\frac{D(t)}{I(t)}.
\]
Since $u^q=|u|^{q-1}u$, we have $u^{q+1}=\av{u}^{q+1}$, so $I(t)\geq 0$. Observe that $D(t)\le 0$ and therefore $N(t)\le 0$.  The choice of $I$ and $D$ is motivated by the natural energy structure of the equation: integration by parts gives
\[
\int_M u^q\,\Lap_{p,\varphi}u^q\,e^{-\varphi}\,dV_g = -\int_M |\nabla u^q|^p\, e^{-\varphi}\,dV_g = D(t).
\]
Set
\[
\delta=q(p-1)-1.
\]

Our first main result establishes the monotonicity of the parabolic frequency for the doubly nonlinear equation.

\begin{theorem}\label{thm:static}
Let $u$ satisfy $\partial_t u = \Lap_{p,\varphi}u^q$ and assumption~\eqref{E:Assumption}. Then
\begin{equation}\label{E:Derivative_of_N_t_1}
    N'(t)\ge \delta\,N(t)^2.
\end{equation}
In particular, if $\delta\ge 0$, then $N$ is monotone increasing. Moreover, if $\delta=0$, then $\log I(t)$ is convex. If $\delta\neq 0$, then $-\delta^{-1} I(t)^{-\delta/(q+1)}$ is convex.
\end{theorem}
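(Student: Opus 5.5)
Write $v=u^q$, $A=|\nabla v|^{p-2}\nabla v$ and $d\mu=e^{-\varphi}\,dV_g$, so that the equation reads $u_t=\Lap_{p,\varphi}v=e^{\varphi}\diver(e^{-\varphi}A)$. Assumption \eqref{E:Assumption} will be used throughout to justify every integration by parts without boundary terms. The plan is to compute $I'$ and $D'$ directly and then apply Cauchy--Schwarz.

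\textbf{Derivative of $I$.} Since $u^{q+1}=|u|^{q+1}$ has $t$-derivative $(q+1)u^q u_t$, integration by parts gives
\[
I'(t)=(q+1)\int_M v\,u_t\,d\mu=(q+1)\int_M v\,\Lap_{p,\varphi}v\,d\mu=(q+1)D(t).
\]

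\textbf{Derivative of $D$.} Differentiating $|\nabla v|^p$ and integrating by parts,
\[
D'(t)=-p\int_M A\cdot\nabla v_t\,d\mu=p\int_M v_t\,\Lap_{p,\varphi}v\,d\mu=p\int_M v_t\,u_t\,d\mu .
\]
Here $v_t=q|u|^{q-1}u_t$, so
\[
D'(t)=pq\int_M |u|^{q-1}u_t^2\,d\mu\ \ge 0 .
\]

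\textbf{Cauchy--Schwarz step.} Apply Cauchy--Schwarz with the weight $|u|^{q-1}$:
\[
D^2=\Big(\int_M v\,u_t\,d\mu\Big)^2=\Big(\int_M |u|^{q-1}u\,u_t\,d\mu\Big)^2\le\int_M|u|^{q+1}\,d\mu\int_M|u|^{q-1}u_t^2\,d\mu .
\]
This says $D^2\le I\cdot D'/(pq)$, that is, $D'\ge pq\,D^2/I$. Therefore
\[
N'=\frac{D'}{I}-\frac{D\,I'}{I^2}\ge pq\,N^2-(q+1)N^2=\big(q(p-1)-1\big)N^2=\delta N^2 .
\]
For $\delta\ge0$ this gives $N'\ge0$.

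\textbf{Convexity.} Since $I'=(q+1)D$,
\[
(\log I)'=(q+1)N,
\]
which is nondecreasing when $\delta=0$, so $\log I$ is convex. For $\delta\ne0$ set $f=-\delta^{-1}I^{-\delta/(q+1)}$. Then
\[
f'=\frac{1}{q+1}\,I^{-\delta/(q+1)}\,\frac{I'}{I}=I^{-\delta/(q+1)}\,N .
\]
Differentiating again,
\[
f''=I^{-\delta/(q+1)}\Big(-\delta N^2+N'\Big)\ge0 ,
\]
using $(I^{-\delta/(q+1)})'=-\delta\,I^{-\delta/(q+1)}N$ together with \eqref{E:Derivative_of_N_t_1}.

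\textbf{Main obstacle.} The algebra above is routine. The delicate part is justification, especially where $u$ vanishes: the weight $|u|^{q-1}$ is singular when $q<1$, and $v_t=q|u|^{q-1}u_t$ is then only formal. I would handle this in two ways.
\begin{itemize}
\item Interpret $\int v_t\,\Lap_{p,\varphi}v$ as $\int |u|^{q-1}u_t^2$ on $\{u\ne0\}$. This uses that $u_t=0$ almost everywhere on $\{u=0\}$ (Stampacchia-type, since $u_t\in W^{1,q+1}$).
\item Alternatively, regularize with $(u^2+\epsilon)^{(q-1)/2}u$ and pass to the limit.
\end{itemize}
Wherever $I=0$, the solution vanishes; I treat only the times where $I>0$.
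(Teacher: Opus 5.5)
Your proposal is correct and follows essentially the paper's argument: the same identities $I'=(q+1)D$ and $D'=pq\int_M |u|^{q-1}u_t^2\,e^{-\varphi}$, followed by the same weighted Cauchy--Schwarz inequality $D^2\le I\,D'/(pq)$. The only difference is the order of deduction. The paper first proves that $N_G=N\,I^{-\delta/(q+1)}$ is nondecreasing; this $N_G$ is exactly your $f'$, so $N_G'=f''=I^{-\delta/(q+1)}(N'-\delta N^2)$. The paper then deduces $N'\ge\delta N^2$ from this, whereas you prove $N'\ge\delta N^2$ first and obtain convexity from it.
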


As a direct consequence of Theorem~\ref{thm:static}, we obtain infinite extinction time for solutions when $\delta\geq 0$ and a lower bound on the extinction time when $\delta<0$.

\begin{corollary}\label{cor:backward}
Suppose $u$ solves \eqref{eq:doublynonlinear} on $M\times[a,b]$ with $u(\cdot,a)\not\equiv 0$ under assumption \eqref{E:Assumption}. If $\delta\ge 0$, then $u(\cdot, t)\not\equiv 0$ on $M$ for every $t\in [a,b]$. If $\delta < 0$, then $u(\cdot, t)\not\equiv 0$ on $M$ for every $t \in[a,b_0)$, where $b_0=\min\{1/(N(a)\delta)+a,b\}$.
\end{corollary}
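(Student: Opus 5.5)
The plan is to turn Theorem~\ref{thm:static} into a lower bound on $I(t)$ through the ODE for $\log I$. First I compute $I'$. Using the assumption~\eqref{E:Assumption} to differentiate under the integral and to integrate by parts,
\[
I'(t)=(q+1)\int_M u^q\,u_t\,e^{-\varphi}\,dV_g=(q+1)\int_M u^q\,\Lap_{p,\varphi}u^q\,e^{-\varphi}\,dV_g=(q+1)D(t).
\]
Hence $(\log I)'=(q+1)N$ on any interval where $I>0$. I will also use that $I$ is continuous in $t$, which follows from the same regularity.

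The argument runs on a maximal interval of positivity. Since $u(\cdot,a)\not\equiv0$, we have $I(a)>0$. Let $t^*=\sup\{t\in[a,b]: I>0 \text{ on } [a,t]\}$, which is positive-length by continuity. On $[a,t^*)$ the frequency $N$ is defined and Theorem~\ref{thm:static} gives $N'\ge\delta N^2$. The goal is a bound $N(t)\ge -K$ on $[a,t^*)$ with $K$ independent of $t$. Integrating $(\log I)'=(q+1)N$ then gives
\[
I(t)\ge I(a)\,e^{-(q+1)K(t-a)}
\]
on $[a,t^*)$. By continuity $I(t^*)>0$, so $t^*$ cannot be a point in the interior of the admissible range, and it must equal the right endpoint.

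\emph{Case $\delta\ge0$.} Here $N'\ge\delta N^2\ge0$, so $N(t)\ge N(a)$, and $K=-N(a)$ works on all of $[a,b]$. Thus $t^*=b$ and $I(t)>0$ for every $t\in[a,b]$. If $b=+\infty$, the same argument applies on every compact subinterval.

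\emph{Case $\delta<0$.} Put $m=-N\ge0$, so that $m'\le|\delta|m^2$. If $m(a)=0$, I compare with $m(a)+\varepsilon$ and let $\varepsilon\to0$; the resulting bound is trivial, consistent with $b_0=b$ under the convention $1/0=+\infty$. If $m(a)>0$, the comparison argument for the Riccati inequality gives $(1/m)'\ge-|\delta|$ wherever $m>0$. Therefore
\[
m(t)\le\frac{m(a)}{1-|\delta|\,m(a)(t-a)}\quad\text{for } t<a+\frac{1}{|\delta|m(a)}=a+\frac{1}{\delta N(a)}.
\]
At points where $m$ vanishes, the bound holds trivially.

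This bound is uniform on each $[a,b_1]$ with $b_1<b_0$. The positivity argument then shows $I>0$ on $[a,b_1]$, and letting $b_1\uparrow b_0$ yields $I>0$ on $[a,b_0)$. Finally, $I(t)>0$ is equivalent to $u(\cdot,t)\not\equiv0$.

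The main obstacle is the justification step. One needs $I$ to be absolutely continuous with $I'=(q+1)D$, and the Riccati comparison must be handled carefully where $N$ may vanish. Both points are routine given~\eqref{E:Assumption} and Theorem~\ref{thm:static}.
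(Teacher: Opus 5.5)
Your proposal is correct and follows the paper's route: you lower-bound $N$ using Theorem~\ref{thm:static} (monotonicity when $\delta\ge0$, and the Riccati comparison $(N^{-1})'\le-\delta$ when $\delta<0$), then integrate $(\log I)'=(q+1)N$ to keep $I(t)>0$. Your maximal-interval continuity argument, together with the handling of $N(a)=0$, makes explicit something the paper assumes tacitly, namely that $I$ stays positive so $N$ is defined throughout. Replacing the paper's explicit integrated lower bounds for $I$ with a uniform bound $N\ge -K$ on each $[a,b_1]$ costs nothing for this conclusion.
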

In particular, Corollary~\ref{cor:backward} yields backward uniqueness when $\delta\geq 0$: if $u(\cdot,b)\equiv 0$, then $u$ must vanish identically on $M$ for all $t\in [a,b]$.

Beyond backward uniqueness, the monotonicity of $N$ in the slow diffusion case $\delta>0$ leads to the following strong unique continuation result at infinity. We first recall the definition of the vanishing order.

We say that a function $u:M\times(a,\infty)\to \R$ \emph{vanishes to order $k$ at $\infty$} if there exists a constant $C>0$ such that for all $t>a$,
\begin{equation}\label{E:I_k}
    I(t)\le C(t-a+1)^{-k}.
\end{equation}
Moreover, we say that a function $u:M\times(a,\infty)\to \R$ \emph{vanishes to infinite order at $\infty$} if for any integer $k>0$, there exists a constant $C>0$ such that \eqref{E:I_k} holds for all $t>a$.
\begin{corollary}\label{cor:unique_continuation}
    Let $u$ be a solution to \eqref{eq:doublynonlinear} on $M\times(a,\infty)$ satisfying assumption \eqref{E:Assumption},  with $\delta>0$. If $u$ vanishes to infinite order at $\infty$, then $u\equiv 0$.
\end{corollary}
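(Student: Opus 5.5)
The plan is to turn the monotonicity of Theorem~\ref{thm:static} into a \emph{polynomial lower bound} on $I(t)$ with a fixed exponent depending only on $q$ and $\delta$. Vanishing to infinite order then forces a contradiction unless $u\equiv 0$.

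First, I compute $I'(t)$. Under assumption~\eqref{E:Assumption}, differentiating under the integral and integrating by parts (as in the identity displayed before Theorem~\ref{thm:static}) gives
\[
I'(t)=(q+1)\int_M u^q\,u_t\,e^{-\varphi}\,dV_g=(q+1)\int_M u^q\,\Lap_{p,\varphi}u^q\,e^{-\varphi}\,dV_g=(q+1)D(t).
\]
Now suppose $u\not\equiv 0$, and pick $t_0>a$ with $u(\cdot,t_0)\not\equiv 0$, so $I(t_0)>0$. Applying Corollary~\ref{cor:backward} on $[t_0,T]$ for every $T>t_0$ (this uses $\delta\ge0$) shows that $I(t)>0$ for all $t\ge t_0$. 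Hence $N$ is well defined there and $(\log I)'=(q+1)N$.

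Second, I integrate the differential inequality $N'\ge\delta N^2$ from Theorem~\ref{thm:static}, using $N\le 0$. If $N(t_0)=0$, then monotonicity gives $0\ge N(t)\ge N(t_0)=0$ for $t\ge t_0$. So $I$ is constant and positive on $[t_0,\infty)$, which contradicts $I(t)\to0$. Otherwise $N(t_0)<0$, and $N(t)<0$ as long as it stays nonzero; if $N$ reaches $0$ at some later time, we are back in the previous case from that time on. While $N<0$ we have $(1/N)'=-N'/N^2\le-\delta$. Integrating gives
\[
\frac{1}{|N(t)|}\ \ge\ \frac{1}{|N(t_0)|}+\delta(t-t_0), \qquad\text{that is,}\qquad |N(t)|\le \frac{1}{c+\delta(t-t_0)},\quad c:=|N(t_0)|^{-1}.
\]

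Third, I integrate $(\log I)'=-(q+1)|N|$ to get
\[
\log I(t)-\log I(t_0)\ \ge\ -(q+1)\int_{t_0}^t\frac{ds}{c+\delta(s-t_0)} = -\frac{q+1}{\delta}\log\Bigl(1+\frac{\delta(t-t_0)}{c}\Bigr).
\]
This yields
\[
I(t)\ge I(t_0)\bigl(1+\delta|N(t_0)|(t-t_0)\bigr)^{-(q+1)/\delta}.
\]
This is a lower bound of order $(t-a+1)^{-(q+1)/\delta}$ with a fixed exponent. Taking an integer $k>(q+1)/\delta$ in \eqref{E:I_k} gives $I(t)(t-a+1)^{(q+1)/\delta}\to0$, which contradicts this lower bound. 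Therefore $u\equiv0$.

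The main obstacle is rigor rather than the computation. We must ensure that $I>0$ for all later times, so that $N$ is defined and the ODE argument applies; Corollary~\ref{cor:backward} with $\delta>0$ supplies this. We must also handle the degenerate case where $N$ vanishes. The differentiation of $I$ is justified by assumption~\eqref{E:Assumption}, exactly as in the proof of Theorem~\ref{thm:static}.
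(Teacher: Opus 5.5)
Your proof is correct and is essentially the paper's argument (Proposition~\ref{P_vanishing_order}): you derive the polynomial lower bound $I(t)\ge I(t_0)\bigl(1+\delta|N(t_0)|(t-t_0)\bigr)^{-(q+1)/\delta}$ from frequency monotonicity and contradict vanishing of order $k>(q+1)/\delta$. The only difference is that the paper integrates $N_G'\ge 0$ once (convexity of $-\delta^{-1}I^{-\delta/(q+1)}$) instead of integrating $N'\ge\delta N^2$ and then $(\log I)'=(q+1)N$; this yields exactly the same bound and spares you the case analysis of $N$ reaching zero.
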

It is worth noting that our definition of vanishing to infinite order at $\infty$ is weaker than that of Colding and Minicozzi \cite{ColdingMinicozzi2022}, who use the definition that  $u:M\times(a,\infty)\to \R$ vanishes to infinite order at $\infty$ if $\lim_{t\to \infty}e^{ct}I(t)=0$ for all constants $c$.

We next consider equations with a controlled nonlinear perturbation.  Specifically, we assume that $u$ satisfies
\begin{equation}\label{eq:error}
\big|\partial_t u - \Lap_{p,\varphi}u^q\big| \le 
\begin{cases}
    C(t)\bigl(|u| + |\nabla u^q|^{p/(q+1)}\bigr)&\quad \text{if}\ q\geq 1,\\
    C(t)\bigl(|u| + |u|^{1/2}|\nabla u^q|^{p/(2q+2)}\bigr)&\quad \text{if}\ 0<q< 1.
\end{cases}
\end{equation}
with $C(t)$ a non‑negative smooth function.  By carefully estimating the error terms arising from the perturbation via H\"{o}lder's and Young's inequalities, we obtain the following almost-monotonicity result.

\begin{theorem}\label{thm:error}
Suppose $u$ satisfies~\eqref{eq:error} on $M\times[a,b]$ under assumption~\eqref{E:Assumption},  with $\delta\ge 0$. Then
\begin{equation}
    \label{E:differ_I_error}
\frac{d}{dt}(\log I(t))\ge \bracket{q+1+C}N - \bracket{2q+3/2}C,
\end{equation}
and
\begin{equation}
    \label{E:differ_N_error}
\frac{d}{dt}N(t)\ge \frac{pq}{q+1}C^2\bracket{N-q-1/2}.
\end{equation}
\end{theorem}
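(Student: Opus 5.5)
Set $E := \partial_t u - \Lap_{p,\varphi}u^q$, so that $|E|$ is controlled by \eqref{eq:error}. I would redo the computations behind Theorem~\ref{thm:static}, keeping track of the extra terms that $E$ produces. All integrals are with respect to $e^{-\varphi}dV_g$.

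\textbf{Derivative of $I$.} Differentiating gives
\[
I' = (q+1)\int u^q u_t = (q+1)D + (q+1)\int u^q E .
\]
It then suffices to bound $\big|\int u^q E\big|$.

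For $q\ge 1$ the pointwise bound is $|u|^q(|u|+|\nabla u^q|^{p/(q+1)})$. The first piece integrates to $I$. For the second piece I would apply H\"older with exponents $(q+1)/q$ and $q+1$, giving $I^{q/(q+1)}(-D)^{1/(q+1)}$. Young's inequality then bounds this by $\tfrac{q}{q+1}I + \tfrac1{q+1}(-D)$, or by a weighted version chosen so that the constants come out as stated.

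For $0<q<1$ the extra piece is $|u|^{q+1/2}|\nabla u^q|^{p/(2q+2)}$. I would use H\"older with exponents $\tfrac{2q+2}{2q+1}$ and $2q+2$, so that $|u|^{q+1/2}$ is raised to the power $q+1$, followed by Young's inequality.

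Dividing by $I$ should give
\[
(\log I)'\ge (q+1)N - C\,(\text{const}) + C\,(\text{const})\,N .
\]
Choosing the Young weights to fit the claimed form $(q+1+C)N-(2q+3/2)C$ is bookkeeping. The factor $q+1$ multiplying $C$ may be absorbed, or the constants may simply be crude.

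\textbf{Derivative of $D$.} As in Theorem~\ref{thm:static},
\[
D' = -p\int |\nabla u^q|^{p-2}\nabla u^q\cdot\nabla(q|u|^{q-1}u_t) = pq\int |u|^{q-1}u_t\,\Lap_{p,\varphi}u^q .
\]
Writing $\Lap_{p,\varphi}u^q = u_t - E$ gives
\[
D' = pq\int |u|^{q-1}u_t^2 - pq\int |u|^{q-1}u_tE .
\]
In the unperturbed proof, the inequality $N'\ge \delta N^2$ came from Cauchy--Schwarz:
\[
\Big(\int u^q u_t\Big)^2\le I\int |u|^{q-1}u_t^2 ,
\]
combined with $D=\int u^q(u_t-E)$.

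I would complete the square. Set $W:=\int |u|^{q-1}u_t^2$ and write
\[
N' = \frac{pqW - pq\int|u|^{q-1}u_tE}{I} - N\,\frac{(q+1)\int u^q u_t}{I}.
\]
Using $\int|u|^{q-1}u_tE \le \tfrac12 W + \tfrac12\int|u|^{q-1}E^2$ and $\int u^qu_t\le \sqrt{IW}$ reduces everything to a quadratic in $\sqrt{W/I}$. Minimizing this quadratic should produce the $\delta N^2\ge 0$ contribution plus error terms. Those error terms require bounding $\int|u|^{q-1}E^2$ by $C^2(I+(-D))$, i.e.\ by $C^2 I(1-N)$, via the same H\"older/Young splitting. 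This works because $|u|^{q-1}|u|^2=|u|^{q+1}$, and $|u|^{q-1}|\nabla u^q|^{2p/(q+1)}$ for $q\ge1$ is handled with H\"older exponents $\tfrac{q+1}{q-1}$ and $\tfrac{q+1}{2}$. For $q<1$, the second definition of the error is designed exactly so that $|u|^{q-1}\cdot|u|\,|\nabla u^q|^{p/(q+1)} = |u|^q|\nabla u^q|^{p/(q+1)}$ is integrable by H\"older, with no negative powers of $u$.

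Dropping the nonnegative $\delta N^2$ term, which is where $\delta\ge0$ is used, leaves
\[
N'\ge \tfrac{pq}{q+1}C^2(N-q-1/2).
\]

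\textbf{Main obstacle.} I expect the hardest step to be the second one: organizing the completion of the square so that the $E$-cross terms are absorbed into the positive $W$ term with exactly the coefficient $\tfrac{pq}{q+1}C^2$. This works only if the split $pq\,W$ versus $(q+1)N\int u^qu_t$ is balanced correctly. I would first try the weighted Cauchy--Schwarz inequality
\[
\int|u|^{q-1}u_t E\le \tfrac{\epsilon}{2}W+\tfrac{1}{2\epsilon}\int|u|^{q-1}E^2,
\]
with $\epsilon$ tuned, and check whether the resulting discriminant is nonnegative precisely when $\delta\ge0$.
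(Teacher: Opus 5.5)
Your estimate for $(\log I)'$ and your bound for $F:=\int|u|^{q-1}E^2$ are fine and match the paper. H\"older followed by Young is just the integrated pointwise Young inequality. It gives $(q+1+C)N-(2q+1)C$ for $q\ge1$ and $(q+1+\tfrac C2)N-(2q+\tfrac32)C$ for $0<q<1$, both of which imply the claim since $N\le0$.

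\textbf{The gap is in the step for $N'$.} Since $N\le0$, the term $-N(q+1)\int u^qu_t/I$ needs a \emph{lower} bound on $\int u^qu_t$, namely $\int u^qu_t\ge-\sqrt{IW}$; the bound $\int u^qu_t\le\sqrt{IW}$ as you wrote it points the wrong way. With the correct bound and your weighted Young inequality, setting $x=\sqrt{W/I}$, you get
\[
N'\ \ge\ pq\bigl(1-\tfrac{\epsilon}{2}\bigr)x^2-(q+1)|N|\,x-\frac{pq}{2\epsilon}\,\frac{F}{I}.
\]
Minimizing in $x$ gives $-\frac{(q+1)^2}{2pq(2-\epsilon)}N^2-\frac{pq}{2\epsilon}\frac{F}{I}$. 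The coefficient of $N^2$ is negative for every $\epsilon$, so no tuning produces $\delta N^2$. Already for $E\equiv0$ this recipe gives only $N'\ge-\frac{(q+1)^2}{4pq}N^2$, not Theorem~\ref{thm:static}.

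The positive term in the unperturbed case comes from the \emph{identity} $\int u^qu_t=D=NI$, used with Cauchy--Schwarz as the lower bound $W\ge D^2/I$. Treating $\int u^qu_t$ as a free variable discards this. Suppose you restore it through $\int u^qu_t=D+\int u^qE$ and estimate the cross terms $pq\int|u|^{q-1}u_tE$ and $(q+1)N\int u^qE$ separately. You are then left with errors of size $|N|\sqrt{F/I}$. With $F/I\lesssim C^2(1+|N|)$, these are only controlled by $C|N|(1+|N|)^{1/2}$. That is not of the form $C^2(N-q-\tfrac12)$, and for $\delta=0$ there is no positive $N^2$ term to absorb it.

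\textbf{The missing idea} is to complete the square with the \emph{same} shift $u_t-\tfrac12E$ in both terms of $N'I^2=D'I-DI'$. Set $\tilde A=\int u^q(u_t-\tfrac12E)$ and $B=\int u^qE$. Then $D=\tilde A-\tfrac12B$ and $I'=(q+1)(\tilde A+\tfrac12B)$, hence $DI'=(q+1)(\tilde A^2-\tfrac14B^2)$. Also $D'=pq\int|u|^{q-1}\bigl((u_t-\tfrac12E)^2-\tfrac14E^2\bigr)$. All cross terms cancel exactly, and Cauchy--Schwarz applied to $\tilde A$ gives
\[
N'I^2\ \ge\ \delta\,\tilde A^2+\frac{q+1}{4}B^2-\frac{pq}{4}\,I\,F\ \ge\ -\frac{pq}{4}\,I\,F .
\]
Your own bounds on $F$ then finish the argument. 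For $q\ge1$, $F\le2C^2\bigl(\tfrac{2q}{q+1}I-\tfrac{2}{q+1}D\bigr)$ yields $N'\ge\tfrac{pq}{q+1}C^2(N-q)$. For $0<q<1$, $F\le\tfrac{2C^2}{q+1}\bigl((2q+1)I-D\bigr)$ yields $N'\ge\tfrac{pq}{q+1}C^2(\tfrac N2-q-\tfrac12)$. Both are $\ge\tfrac{pq}{q+1}C^2(N-q-\tfrac12)$ since $N\le0$.
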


This leads to the following backward uniqueness result.

\begin{corollary}
    \label{cor:error}
    Suppose $u$ satisfies~\eqref{eq:error} on $M\times[a,b]$ under assumption~\eqref{E:Assumption}, with $\delta\ge 0$ and $\int_a^b C(s)^2\,ds<\infty$. Then backward uniqueness holds: if $u(\cdot,b)=0$, then $u\equiv0$ for all $t\in[a,b]$.
\end{corollary}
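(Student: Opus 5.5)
Suppose, for contradiction, that $u(\cdot,b)\equiv 0$ but $u(\cdot,t_0)\not\equiv 0$ for some $t_0\in[a,b)$. The aim is to show that $\log I$ stays bounded below on $[t_0,b]$, which contradicts $I(b)=0$.

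Let $t_1=\sup\{t\in[t_0,b]: I>0 \text{ on } [t_0,t]\}$. On $[t_0,t_1)$ we have $I>0$, so $N$ is defined and Theorem~\ref{thm:error} applies there. We have $t_1\le b$, and continuity of $I$ in $t$ (from assumption~\eqref{E:Assumption}) gives $I(t_1)=0$ whenever $t_1<b$. The same continuity gives $I(t_1)=0$ when $t_1=b$, since then $I(t_1)=I(b)=0$. So in every case $I(t)\to0$ as $t\to t_1^-$, i.e.\ $\log I\to-\infty$.

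The first step is a lower bound on $N$ from \eqref{E:differ_N_error}. Set $\tilde N = N - q - \tfrac12\le 0$. Then
\[
\tilde N' \ge \tfrac{pq}{q+1}C^2\tilde N .
\]
Hence $\frac{d}{dt}\bigl(\tilde N\, e^{-\frac{pq}{q+1}\int_{t_0}^t C^2}\bigr)\ge 0$. Because $\tilde N\le 0$, this yields, for $t\in[t_0,t_1)$,
\[
\tilde N(t)\ge \tilde N(t_0)\, e^{\frac{pq}{q+1}\int_{t_0}^{t}C^2}\ge \tilde N(t_0)\, e^{\frac{pq}{q+1}\int_a^b C^2}=:-K>-\infty .
\]
The bound is finite because $\int_a^b C^2<\infty$, and because $N(t_0)>-\infty$ (this uses $\nabla u^q\in L^p_\varphi$ and $I(t_0)>0$). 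So $N\ge -K'$ on $[t_0,t_1)$ with $K'=K+q+\tfrac12$.

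The second step inserts this into \eqref{E:differ_I_error}:
\[
(\log I)'\ge (q+1+C)N-(2q+\tfrac32)C\ge -(q+1)K' - (K'+2q+\tfrac32)\,C(t).
\]
Integrating from $t_0$ to $t<t_1$ gives
\[
\log I(t)\ge \log I(t_0)-(q+1)K'(b-a)-(K'+2q+\tfrac32)\int_a^b C .
\]
The right-hand side is finite because $\int_a^b C\le (b-a)^{1/2}\bigl(\int_a^b C^2\bigr)^{1/2}<\infty$ by Cauchy–Schwarz. (If $b=\infty$ the claim $u(\cdot,b)=0$ is meaningless, so $b<\infty$ can be assumed.) Thus $\log I$ is bounded below on $[t_0,t_1)$, contradicting $I(t)\to 0$ as $t\to t_1^-$. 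Therefore $u(\cdot,t)\equiv0$ for every $t\in[a,b]$.

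The main delicate point is the justification of this bookkeeping: the continuity of $I$ in $t$, the absolute continuity of $\log I$ and $N$ on $[t_0,t_1)$ needed to integrate the differential inequalities, and the fact that $N(t_0)$ is finite. All of these come from assumption~\eqref{E:Assumption}, exactly as in the proof of Corollary~\ref{cor:backward}.
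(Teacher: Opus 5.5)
Your proof is correct and follows essentially the same route as the paper: a Grönwall argument on \eqref{E:differ_N_error} gives a uniform lower bound on $N$, which is then inserted into \eqref{E:differ_I_error} and integrated to bound $\log I$ from below, contradicting $I(b)=0$. The differences are minor refinements. You make explicit the restriction to the maximal interval $[t_0,t_1)$ on which $I>0$, which the paper leaves implicit when it says backward uniqueness "follows as a direct result". You also control $\int_a^b C$ via Cauchy--Schwarz from $\int_a^b C^2<\infty$, where the paper uses $\sup_{[a,b]}C$.
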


A solution $u$ of \eqref{eq:doublynonlinear} is called an ancient solution if it is defined on $M\times (-\infty,T)$. Without loss of generality, we take $T=0$.

As an application of the monotonicity of the parabolic frequency, we establish the following Liouville-type theorem for ancient solutions when $\delta\geq 0$. 
\begin{theorem}\label{T:ancient_solution}
    Let $u$ be an ancient solution to \eqref{eq:doublynonlinear} satisfying assumption \eqref{E:Assumption}. Then the following hold.
	\begin{enumerate}
		\item[\textup{(i)}] If $\delta>0$ and  $0 < I(t) < \infty$ for all $t <0$, then $u$ is constant.
		\item[\textup{(ii)}] If  $\delta=0$ and $I(t)$ has at most polynomial growth, i.e., there exist $C, d > 0$ such that for all $t<0$, 
		\begin{equation*}
			I(t) \leq C\,\bracket{1+|t|}^d,
		\end{equation*}
		then $u$ is constant.
	\end{enumerate}
\end{theorem}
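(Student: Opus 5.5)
The plan is to use the monotonicity of Theorem~\ref{thm:static} together with the basic identity for $I'$. Under assumption~\eqref{E:Assumption} we can differentiate under the integral and integrate by parts:
\[
I'(t)=(q+1)\int_M u^q\,u_t\,e^{-\varphi}\,dV_g=(q+1)\int_M u^q\,\Lap_{p,\varphi}u^q\,e^{-\varphi}\,dV_g=(q+1)D(t).
\]
Hence $(\log I)'=(q+1)N$ wherever $I>0$. In both cases the goal is to show $N\equiv 0$ on $(-\infty,0)$. Then $D\equiv 0$, so $\nabla u^q\equiv 0$, and $u(\cdot,t)$ is constant in space for each $t$. Consequently $\partial_t u=\Lap_{p,\varphi}u^q=0$, and $u$ is constant in space-time.

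\textbf{Case (i), $\delta>0$.} Here $I>0$ everywhere, so $N$ is defined on all of $(-\infty,0)$ and $N\le 0$. Suppose, for contradiction, that $N(t_0)<0$ for some $t_0<0$. Since $N'\ge\delta N^2\ge 0$ by \eqref{E:Derivative_of_N_t_1}, $N$ is nondecreasing, so $N(t)\le N(t_0)<0$ for all $t<t_0$. On that interval
\[
\frac{d}{dt}\Bigl(-\frac1{N}\Bigr)=\frac{N'}{N^2}\ge\delta .
\]
Integrating from $t$ to $t_0$ gives
\[
-\frac1{N(t_0)}\ge -\frac1{N(t)}+\delta(t_0-t)>\delta(t_0-t).
\]
The right-hand side tends to $\infty$ as $t\to-\infty$, while the left-hand side is a fixed finite number. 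This contradiction shows $N\equiv 0$, and we conclude as above.

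\textbf{Case (ii), $\delta=0$.} First handle the possibility that $I$ vanishes. Since $I'=(q+1)D\le 0$, $I$ is nonincreasing. If $I(t_1)=0$ for some $t_1$, then $u(\cdot,t_1)\equiv0$. Corollary~\ref{cor:backward}, applied on $[t,t_1]$, gives $u\equiv 0$ for all earlier times. Monotonicity of $I$ gives $u\equiv 0$ for all later times, so $u\equiv 0$ and we are done.

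Otherwise $I>0$ on $(-\infty,0)$. Suppose $N(t_0)=-c<0$. By Theorem~\ref{thm:static}, $N$ is nondecreasing, so $N(t)\le -c$ for $t<t_0$. Integrating $(\log I)'=(q+1)N$ then gives
\[
I(t)\ge I(t_0)\,e^{(q+1)c(t_0-t)}.
\]
This is exponential growth as $t\to-\infty$, which contradicts the assumption $I(t)\le C(1+|t|)^d$. Hence $N\equiv0$, and we conclude as before.

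\textbf{Main obstacle.} The step needing the most care is the justification of the identity $I'=(q+1)D$. This requires differentiating $\int_M |u|^{q+1}e^{-\varphi}$ in $t$ and integrating by parts on a noncompact $M$. I would rely on assumption~\eqref{E:Assumption}, in the same way it is presumably used in the proof of Theorem~\ref{thm:static}, rather than redoing the cutoff argument. A secondary point is the degenerate situation where $I$ vanishes at some time, which is handled by the backward uniqueness argument above.
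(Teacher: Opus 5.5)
Your proof is correct and follows the paper's argument: case (i) is the same integration of $\frac{d}{dt}(-1/N)\ge\delta$ backward from $t_0$, and in case (ii) your exponential lower bound $I(t)\ge I(t_0)e^{(q+1)c(t_0-t)}$ is a more direct form of the paper's estimate $|N(t)|\le \frac{\log(C(1+2|t|)^d)-\log I(t_0)}{(q+1)|t|}$. Your separate treatment of the case where $I$ vanishes (via Corollary~\ref{cor:backward} and monotonicity of $I$) covers a point the paper passes over; conversely, the paper uses local H\"older continuity of solutions to pass from ``constant a.e.'' to ``constant'', a step you should state as well.
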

\begin{remark}
    {It is worth noting that none of the above theorems require the non-negativity of the solution $u$. We will always consider sign‑changing solutions. In particular,  $u^q$ is understood as $|u|^{q-1}u$,  $u^{q-1}$ as $|u|^{q-1}$ and $u^{q+1}$ as $|u|^{q+1}$.}
\end{remark}
All of the preceding theorems extend, with a slight adjustment in the proof (see Remark \ref{R:bounded_domains}), to solutions of the doubly nonlinear equation on any relatively compact domain $\Omega\subset M$ with Dirichlet boundary condition. More precisely, the solution $u$ satisfies
    \begin{equation}\label{E:bounded_domains}
    \begin{cases}
        \partial_t u = \Lap_{p,\varphi} u^q,\quad & (x,t)\in\Omega\times[a,b],\\
        u(x,t) = 0, & x\in\partial\Omega\times[a,b],
    \end{cases}
\end{equation}
and assumption \eqref{E:Assumption} is replaced by the assumption that for each $t\in[a,b]$ ($-\infty\leq a < b\leq +\infty$),
\begin{equation}\label{A:Assumption_bounded}
    u,u_t\in W^{1,q+1}_{0;\varphi}(\Omega);\; \grad u^q\in \bracket{L^{p}_\varphi(\Omega)}^n.
\end{equation}

The paper is structured as follows.  In Section~\ref{Sec2}, we prove Theorem~\ref{thm:static} together with Corollaries~\ref{cor:backward} and~\ref{cor:unique_continuation}, and compute the parabolic frequency of the Barenblatt solutions as an illustrative example.  Section~\ref{Sec3} is devoted to the almost-monotonicity property of the parabolic frequency for doubly nonlinear equations with lower-order terms, containing the proofs of Theorem~\ref{thm:error} and Corollary~\ref{cor:error}.  The Liouville-type results for ancient solutions are established in Section~\ref{sec4}.

\section{Parabolic frequency on manifolds}\label{Sec2}

In this section, in order to yield the convexity of $\log I(t)$ when $\delta=0$ and $-\delta^{-1}I(t)^{-\delta/(q+1)}$ when $\delta\neq 0$, we consider the following generalized parabolic frequency:
\begin{equation}\label{E:generalized_frequency}
    N_G(t) := \frac{D(t)}{I(t)^{\frac{pq}{q+1}}} = N(t)\cdot I(t)^{-\delta/(q+1)}.
\end{equation}
Then we have the following lemma.
\begin{lemma}\label{L:N_G>0}
    Let $u$ satisfy $\partial_t u = \Lap_{p,\varphi}u^q$ and assumption~\eqref{E:Assumption}. Then
\begin{equation}\label{E:Derivative_of_N_G}
    N_G'(t)\ge 0.
\end{equation}
\end{lemma}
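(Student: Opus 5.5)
We compute $I'$ and $D'$ directly and then close the argument with Cauchy--Schwarz, the same way one does for the heat equation, with the weight $I^{pq/(q+1)}$ chosen so that the homogeneities match. Write $w=u^q$, so $\partial_t u=\Lap_{p,\varphi}w$, and abbreviate $d\mu=e^{-\varphi}\,dV_g$.

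\textbf{Step 1: derivative of $I$.} Since $\frac{d}{dt}|u|^{q+1}=(q+1)u^q u_t$, integration by parts (justified by \eqref{E:Assumption}) gives
\[
I'(t)=(q+1)\int_M w\,\Lap_{p,\varphi}w\,d\mu=(q+1)D(t).
\]

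\textbf{Step 2: derivative of $D$.} We have
\[
D'(t)=-p\int_M|\nabla w|^{p-2}\langle\nabla w,\nabla w_t\rangle\,d\mu=p\int_M w_t\,\Lap_{p,\varphi}w\,d\mu .
\]
Using $w_t=q|u|^{q-1}u_t$ and $u_t=\Lap_{p,\varphi}w$, this becomes
\[
D'(t)=pq\int_M|u|^{q-1}u_t^2\,d\mu\ge 0 .
\]
The integration by parts in $D'$ is the delicate point. Pairing $\nabla w$ against $\nabla w_t$ requires regularity of $w_t$ and the $L^p$ integrability from \eqref{E:Assumption}. I would do this formally, or via approximation, and not belabor it.

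\textbf{Step 3: Cauchy--Schwarz.} Write
\[
D=\int_M w\,u_t\,d\mu=\int_M \bigl(|u|^{(q-1)/2}u_t\bigr)\bigl(|u|^{-(q-1)/2}w\bigr)\,d\mu .
\]
Since $|u|^{-(q-1)}w^2=|u|^{q+1}$, Cauchy--Schwarz gives
\[
D^2\le\Bigl(\int_M|u|^{q-1}u_t^2\,d\mu\Bigr)\,I=\frac{D'\,I}{pq}.
\]
Hence
\[
D'\ge pq\,\frac{D^2}{I}.
\]
On the set where $u=0$ the integrand $w u_t$ vanishes, so the splitting is harmless there.

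\textbf{Step 4: conclusion.} Differentiate $N_G=D\,I^{-pq/(q+1)}$ and use $I'=(q+1)D$:
\[
N_G'=I^{-\frac{pq}{q+1}}\Bigl(D'-\frac{pq}{q+1}\,\frac{D\,I'}{I}\Bigr)=I^{-\frac{pq}{q+1}}\Bigl(D'-pq\,\frac{D^2}{I}\Bigr)\ge 0 .
\]
This uses $I>0$. If $I(t_0)=0$, then $u(\cdot,t_0)\equiv0$, and $N_G$ is only considered where $I>0$.

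The main obstacle is rigorously justifying the computation of $D'$. Only the combination $\int_M w_t\,\Lap_{p,\varphi}w\,d\mu$ is needed, and this is where the hypotheses $u_t\in W^{1,q+1}_{0;\varphi}$ and $\nabla u^q\in L^p_\varphi$ enter. For $q<1$ there is the additional issue of the singular weight $|u|^{q-1}$. I would handle this by noting that $w_t\,u_t=q|u|^{q-1}u_t^2$ holds almost everywhere, and that the Cauchy--Schwarz step only involves $w\,u_t$, which is integrable.
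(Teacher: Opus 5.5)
Your proposal is correct and follows the paper's proof essentially step for step: $I'=(q+1)D$, then $D'=pq\int_M |u|^{q-1}(\Lap_{p,\varphi}u^q)^2e^{-\varphi}$, then Cauchy--Schwarz giving $D^2\le I\int_M|u|^{q-1}u_t^2e^{-\varphi}$, and finally $N_G'\,I^{pq/(q+1)+1}=D'I-\frac{pq}{q+1}DI'\ge 0$. The regularity issues you flag (the integration by parts in $D'$, and the weight $|u|^{q-1}$ when $q<1$) are treated in the paper just as formally, by appeal to assumption~\eqref{E:Assumption}.
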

\begin{proof}
    Set $v=u^q$ so that $v_t = q u^{q-1}u_t$ and $u_t = \Lap_{p,\varphi}v$. Differentiating $I$ and using the equation gives
\[
I'(t) = (q+1)\int_M u^q u_t e^{-\varphi} = (q+1)\int_M u^q \Lap_{p,\varphi}v \, e^{-\varphi}.
\]
Integrating by parts (noting that $\Lap_{p,\varphi}v = e^{\varphi}\diver(e^{-\varphi}|\nabla v|^{p-2}\nabla v)$) yields
\begin{equation}\label{E:integrate_I'}
    \int_M u^q \Lap_{p,\varphi}v \, e^{-\varphi}
= -\int_M \langle \nabla u^q, |\nabla v|^{p-2}\nabla v\rangle e^{-\varphi}
= -\int_M |\nabla v|^p e^{-\varphi} = D(t).
\end{equation}
Thus,
\begin{equation}\label{E:I'}
    I'(t)=(q+1)D(t).
\end{equation}
Now differentiate $D(t)$:
\[
D'(t) = -\int_M \partial_t\big(|\nabla v|^p\big) e^{-\varphi}
= -p\int_M |\nabla v|^{p-2}\langle\nabla v,\nabla v_t\rangle e^{-\varphi}.
\]
Integrating by parts gives
\begin{equation}\label{E:integrate_D'}
    \int_M |\nabla v|^{p-2}\langle\nabla v,\nabla v_t\rangle e^{-\varphi}
= -\int_M v_t \,\diver\!\big(|\nabla v|^{p-2}\nabla v e^{-\varphi}\big)
= -\int_M v_t e^{-\varphi}\Lap_{p,\varphi}v.
\end{equation}
Therefore,
\[
D'(t) = p\int_M v_t e^{-\varphi}\Lap_{p,\varphi}v
= pq\int_M u^{q-1} (\Lap_{p,\varphi}v)^2 e^{-\varphi}.
\]

We now turn to the computation of $N_G'$:
\begin{align*}
    N_G'I^{\frac{pq}{q+1}+1} 
    &=  D'I -\frac{pq}{q+1} DI'\\
    &= pq\bracket{\int_M u^{q-1}(\Lap_{p,\varphi} v)^2 e^{-\varphi}\;I - D^2}.
\end{align*}
Applying H\"{o}lder's inequality then yields,
\begin{align}\label{E:Holder_in_lemma_N_G}
    \Big(\int_M u^q\Lap_{p,\varphi}v\, e^{-\varphi}\Big)^2
&\le \Big(\int_M u^{q-1}(\Lap_{p,\varphi} v)^2 e^{-\varphi}\Big)
   \Big(\int_M u^{q+1} e^{-\varphi}\Big)\\
&= \Big(\int_M u^{q-1}(\Lap_{p,\varphi} v)^2 e^{-\varphi}\Big) I\notag.
\end{align}
Hence,
\[
\int_M u^{q-1}(\Lap_{p,\varphi} v)^2 e^{-\varphi}\, I \ge D^2.
\]
Thus, we obtain
\[
N_G'I^{\frac{pq}{q+1}+1} \ge 0,
\]
which gives the desired inequality. 
\end{proof}

Now we give the proof of the first main theorem.

\begin{proof}[Proof of Theorem \ref{thm:static}]
    First, from the definition \eqref{E:generalized_frequency}, we obtain:
    \begin{align*}
        N'(t)&= (N_G(t)I(t)^{\delta/(q+1)})'= N_G'(t)I(t)^{\delta/(q+1)} + \frac{\delta}{q+1} N_G(t)I(t)^{\delta/(q+1)-1}I'(t).
    \end{align*}
    It follows from \eqref{E:Derivative_of_N_G} and  the equality \eqref{E:I'} that
    \begin{align*}
        N'(t) \ge \delta N_G(t)I(t)^{\delta/(q+1)}\frac{D(t)}{I(t)}
        =\delta\, N(t)^2,
    \end{align*}
    which gives the inequality \eqref{E:Derivative_of_N_t_1}. 
    
    Note that for $\delta\neq 0$,
    \begin{align}\label{E:N_G_I_delta>0}
        N_G(t) = \frac{1}{q+1} \frac{I'(t)}{I(t)^{\delta/(q+1)+1}} = -\frac{1}{\delta} \bracket{I(t)^{-\delta/(q+1)}}'.
    \end{align}
    Therefore, Lemma \ref{L:N_G>0} implies that the function $-\delta^{-1}I(t)^{-\delta/(q+1)}$ is convex. 
    
    For $\delta=0$, $N(t) = N_G(t) = (\log I(t))'/(q+1)$. Therefore, $\log I(t)$ is convex.
\end{proof}
\begin{remark}\label{R:bounded_domains}
    Consider an open and relatively compact domain $\Omega\subset M$. Let $u$ be a solution to \eqref{E:bounded_domains} satisfying assumption \eqref{A:Assumption_bounded}. We define the weighted energies
    \[
        I_{\Omega}(t)=\int_{\Omega} u^{q+1}\,e^{-\varphi}\,dV_g,\qquad
        D_{\Omega}(t)=-\int_{\Omega} |\nabla (u^q)|^p\,e^{-\varphi}\,dV_g,
    \]
    and the \emph{parabolic frequency}
    \[
        N_{\Omega}(t)=\frac{D_{\Omega}(t)}{I_{\Omega}(t)}.
    \]
    Since $u=0$ and $u_t=0$ on $\partial\Omega$, the equalities \eqref{E:integrate_I'} and \eqref{E:integrate_D'}, where we use integration by parts, still hold for $u$. Therefore, a similar argument shows that Theorem \ref{thm:static}, as well as Theorem \ref{thm:error} and Theorem \ref{T:ancient_solution}, remain valid for solutions to the doubly nonlinear equation on any compact domain $\Omega \subset M$ with Dirichlet boundary condition.
\end{remark}
Using the monotonicity of parabolic frequency established in Theorem 1, we obtain the following corollary.
\begin{corollary}
    Let $u$ satisfy $\partial_t u = \Lap_{p,\varphi}u^q$ on $M\times[a,b]$ and assumption~\eqref{E:Assumption}. If  $\delta\geq 0$, then for any $t\in [a,b]$,
    \begin{equation}\label{E:cor_static_backward_delta>0}
        I(t)\ge I(a)\exp((q+1)N(a)(t-a)).
    \end{equation}
    If $\delta<0$ and $b_0=\min\{1/(N(a)\delta)+a,b\}$, then for any $t\in [a,b_0)$, 
    \begin{equation}\label{E:cor_static_backward_delta<0}
        I(t)\geq I(a)\bracket{\frac{1}{1-\delta(t-a)N(a)}}^{({q+1})/{\delta}}
    \end{equation}
\end{corollary}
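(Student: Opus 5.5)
We integrate the ODE inequality for $\log I$ using $I'=(q+1)D=(q+1)NI$ from \eqref{E:I'}, combined with the lower bounds on $N$ that Theorem~\ref{thm:static} supplies. We may assume $I(t)>0$ on the interval considered. Otherwise there is a first time $t_*$ where $I$ vanishes. Since $I'\le 0$, $I$ stays zero afterwards, and the argument below on $[a,t_*)$, together with continuity, gives a contradiction whenever the right-hand side is positive. If $I(a)=0$ both inequalities are trivial.

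\emph{Case $\delta\ge 0$.} Theorem~\ref{thm:static} gives $N'\ge \delta N^2\ge 0$, so $N(t)\ge N(a)$ for $t\ge a$. Hence
\[
(\log I)'(t)=(q+1)N(t)\ge (q+1)N(a).
\]
Integrating from $a$ to $t$ yields \eqref{E:cor_static_backward_delta>0}.

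\emph{Case $\delta<0$.} Here $N'\ge \delta N^2$ no longer makes $N$ monotone, so we compare with the Riccati solution. Note that $N\le 0$.
\begin{itemize}
\item If $N(a)=0$, then $D(a)=0$, so $u^q(\cdot,a)$ is constant in $W^{1,q+1}_{0;\varphi}$ and therefore zero. Thus $I(a)=0$ and the claim is trivial.
\item If $N(a)<0$ and $N(t)<0$, set $f=-1/N>0$. Then
\[
f'=N'/N^2\ge \delta, \qquad\text{so}\qquad f(t)\ge f(a)+\delta(t-a)=-\frac{1}{N(a)}+\delta(t-a).
\]
The right-hand side is positive exactly when $t<a+1/(N(a)\delta)$, which explains $b_0$. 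Inverting gives
\[
N(t)\ge \frac{N(a)}{1-\delta(t-a)N(a)}.
\]
\item If $N$ reaches $0$ at some point, the bound holds trivially there since the right-hand side is negative. To handle the possible sign change cleanly, I would argue on the maximal subinterval where $N<0$ and use $N\le 0$ throughout.
\end{itemize}

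Then
\[
(\log I)'\ge \frac{(q+1)N(a)}{1-\delta(t-a)N(a)}=\frac{q+1}{\delta}\frac{d}{dt}\Bigl[-\log\bigl(1-\delta(t-a)N(a)\bigr)\Bigr].
\]
Integrating from $a$ to $t$ gives
\[
\log\frac{I(t)}{I(a)}\ge \frac{q+1}{\delta}\log\frac{1}{1-\delta(t-a)N(a)},
\]
which is \eqref{E:cor_static_backward_delta<0}.

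An alternative route for $\delta<0$ uses the convexity statement in Theorem~\ref{thm:static} directly. The function $g=-\delta^{-1}I^{-\delta/(q+1)}$ is convex, so $g(t)\ge g(a)+g'(a)(t-a)$. By \eqref{E:N_G_I_delta>0}, $g'=N_G=N I^{-\delta/(q+1)}$. Since $-\delta^{-1}>0$, this gives
\[
I(t)^{-\delta/(q+1)}\ge I(a)^{-\delta/(q+1)}\bigl(1-\delta N(a)(t-a)\bigr).
\]
Raising to the power $-(q+1)/\delta>0$ preserves the inequality (both sides are nonnegative, and $1-\delta N(a)(t-a)>0$ for $t<b_0$), which yields exactly \eqref{E:cor_static_backward_delta<0}. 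This route avoids the sign issues of $N$ entirely, so I would use it as the primary argument.

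The main obstacle is the $\delta<0$ case: the Riccati comparison requires care with the sign of $N$ and with possible vanishing of $I$, and the convexity route sidesteps both.
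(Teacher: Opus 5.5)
Your proof is correct. For $\delta\ge 0$, and in your first route for $\delta<0$, you follow the paper: it also rewrites $N'\ge\delta N^2$ as $(N^{-1})'\le-\delta$, deduces $N(t)\ge\bigl(N(a)^{-1}-\delta(t-a)\bigr)^{-1}$ for $t<a+1/(N(a)\delta)$, and integrates $(\log I)'=(q+1)N$.

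Your primary route for $\delta<0$ is genuinely different. You bypass the Riccati inequality and use Lemma~\ref{L:N_G>0} directly: since $g'=N_G$ is nondecreasing, $g(t)-g(a)\ge N_G(a)(t-a)$. Multiplying by $-\delta>0$ and raising to the power $-(q+1)/\delta>0$ gives exactly \eqref{E:cor_static_backward_delta<0}. This is the mechanism the paper itself uses for $\delta>0$ in Proposition~\ref{P_vanishing_order}.

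Both routes rest only on $N_G'\ge0$ and give the same bound, which is an equality when $N_G$ is constant. The difference is that yours never divides by $N$. So the case $N(a)=0$ (yielding $I(t)\ge I(a)$ with $b_0=b$) and the case of $N$ reaching $0$ need no separate treatment, whereas the paper's division by $N$ tacitly excludes them. In exchange, the paper's route records an explicit pointwise lower bound on $N(t)$.

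Your first-zero/continuity argument is a useful addition. It also works with $g$, which is continuous up to $I=0$ because $-\delta/(q+1)>0$. It shows $I>0$ on $[a,b_0)$, which is what is needed for $N$ to be defined there and for Corollary~\ref{cor:backward}.

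One correction to your secondary route: $N(a)=0$ does not force $u(\cdot,a)\equiv0$. Nonzero constants can lie in $W^{1,q+1}_{0;\varphi}(M)$; for compact $M$, $u\equiv c$ is a solution with $N\equiv0$ and $I>0$. Your primary argument does not use this claim, so correctness is unaffected.
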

\begin{proof}
    From Theorem \ref{thm:static}, if $\delta\ge0$, then $N(t)$ is monotone increasing. Thus,  for $t\ge a$,
    \begin{equation*}
        (\log I)' = (q+1)N \ge (q+1)N(a).
    \end{equation*}
    Integrating gives the inequality \eqref{E:cor_static_backward_delta>0}. 

    If $\delta<0$, then $\bracket{N(t)^{-1}}'\leq -\delta$, which means $N(t)^{-1}\leq N(a)^{-1}-\delta(t-a)$. Thus, for $t < 1/(N(a)\delta)+a$,
\[
N(t)\geq \frac{1}{N(a)^{-1}-\delta(t-a)}.
\]
Therefore, 
\[
\frac{d}{dt}(\log I(t))\geq \frac{q+1}{N(a)^{-1}-\delta(t-a)}.
\]
Integrating from $a$ to $t$ yields the inequality \eqref{E:cor_static_backward_delta<0}.
\end{proof}
As an immediate consequence, we obtain Corollary \ref{cor:backward}. 

\begin{remark}
Let $a=0$ and $b=+\infty$ and assume that $\delta<0$ and that $M$ admits the following Euclidean-type Sobolev inequality for $n>p$: 
\begin{equation}\label{Sobolevin}\left(\int_{M}{|v|^{\frac{pn}{n-p}} }\right)^{\frac{n-p}{n}}\leq C\int_{M}{|\nabla v|^{p}} \quad \textnormal{for all}~v\in W^{1, p}(M).\end{equation} 
Let $u_0$ be a nonnegative function. Since $q+1 = -\delta + pq$, we have
\begin{equation*}
    \left(\int_{M}{u_{0}^{q\frac{pn}{n-p}} }\right)^{-\frac{n-p}{n}}
    \leq \frac{\left(\int_{M}{u_{0}^{-\frac{\delta n}{p}} }\right)^{\frac{p}{n}}}{\int_{M}u_{0}^{-\delta+pq}}
    = \frac{\left(\int_{M}{u_{0}^{-\frac{\delta n}{p}} }\right)^{\frac{p}{n}}}{\int_{M}u_{0}^{q+1}}.
\end{equation*}
Then we can estimate the lower bound $T_{lower}=b_{0}$ of the extinction time further as 
\begin{align*}
T_{lower}=\frac{\int_{M}u_{0}^{q+1}}{-\delta\int_{M}|\nabla u_{0}^{q}|^{p}}\leq \frac{\int_{M}u_{0}^{q+1}}{-\delta C\left(\int_{M}{u_{0}^{q\frac{pn}{n-p}} }\right)^{\frac{n-p}{n}}}\leq C^{\prime} \left(\int_{M}{u_{0}^{-\frac{\delta n}{p}} }\right)^{\frac{p}{n}}.
\end{align*} 
If (\ref{Sobolevin}) holds, one of the authors proved in \cite{surig2024finite} the finite extinction time for solutions of $\partial_t u = \Lap_{p,0}u^q$. In particular, in the case $pq\leq \frac{-\delta(n-p)}{p}$, the extinction time obtained was $T=\left(\int_{M}u_{0}^{-(\delta n)/p}\right)^{p/n}$, so that indeed $T_{lower}\leq C^{\prime}T$.		
\end{remark}

In fact, Theorem \ref{thm:static} tells us that if $b=\infty$, then $u$ has a finite vanishing order at $\infty$, which depends on the constants $p $ and $ q$.
\begin{proposition}\label{P_vanishing_order}
    Let $u$ be a nontrivial solution to \eqref{eq:doublynonlinear} on $M\times(a,\infty)$ satisfying assumption \eqref{E:Assumption},  with $\delta>0$. Then the vanishing order of $u$ at $\infty$ is at most $(q+1)/\delta$.
\end{proposition}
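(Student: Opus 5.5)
We want a lower bound on $I(t)$ of the form $I(t)\ge c\,(t-a+1)^{-(q+1)/\delta}$. Once we have it, \eqref{E:I_k} with $k>(q+1)/\delta$ cannot hold for any constant $C$, since it would force $c\le C(t-a+1)^{(q+1)/\delta-k}\to 0$ as $t\to\infty$. The natural tool is the convexity statement in Theorem~\ref{thm:static}. Equivalently, we can use the differential inequality $N'\ge\delta N^2$ together with the identity $(\log I)'=(q+1)N$ from \eqref{E:I'}.

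\textbf{Step 1 (nonvanishing).} Since $u$ is nontrivial and $\delta>0$, Corollary~\ref{cor:backward} (backward uniqueness) gives $I(t)>0$ for all $t>a$. Fix any $t_0>a$; then $N(t_0)\le 0$ is finite and well defined.

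\textbf{Step 2 (lower bound on $N$).} First suppose $N(t_0)<0$. Where $N<0$, the inequality $N'\ge\delta N^2$ gives $(1/N)'=-N'/N^2\le-\delta$. Hence
\[
\frac{1}{N(t)}\le \frac{1}{N(t_0)}-\delta(t-t_0).
\]
Both sides are negative, so $N(t)\ge -1/\bigl(|N(t_0)|^{-1}+\delta(t-t_0)\bigr)$ for $t\ge t_0$. Next, $N$ is monotone increasing and $N\le 0$, so if $N$ vanishes at some time it stays $0$ afterwards. In that case $I$ is constant from then on, which is even better than the bound we need. So we may restrict to the region where $N<0$.

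\textbf{Step 3 (integrate).} Using $(\log I)'=(q+1)N$, integrate from $t_0$ to $t$:
\[
\log\frac{I(t)}{I(t_0)}\ge -\frac{q+1}{\delta}\log\Bigl(1+\delta|N(t_0)|(t-t_0)\Bigr),
\]
that is, $I(t)\ge I(t_0)\bigl(1+\delta|N(t_0)|(t-t_0)\bigr)^{-(q+1)/\delta}$. The right-hand side is bounded below by $c\,(t-a+1)^{-(q+1)/\delta}$ for $t\ge t_0$, with $c>0$ depending on $t_0$, $N(t_0)$ and $I(t_0)$. Alternatively, we can read the same bound directly from the convexity of the function $-\delta^{-1}I^{-\delta/(q+1)}$: a convex function lies above its tangent line, and this gives the linear upper bound $I(t)^{-\delta/(q+1)}\le I(t_0)^{-\delta/(q+1)}+\delta(q+1)^{-1}|D(t_0)|\,I(t_0)^{-1-\delta/(q+1)}(t-t_0)$.

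\textbf{Step 4 (conclude).} As explained at the start, the lower bound from Step 3 contradicts \eqref{E:I_k} for every $k>(q+1)/\delta$ and large $t$. Hence the vanishing order of $u$ is at most $(q+1)/\delta$.

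The main subtlety is Step 1 together with the choice of base point. We cannot use $t=a$ itself, since the solution lives on the open interval $(a,\infty)$ and $N(a)$ may be $-\infty$. This is why we pick an interior $t_0$ and invoke backward uniqueness there to guarantee $I(t_0)>0$.
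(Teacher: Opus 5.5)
Your proof is correct and yields exactly the paper's bound $I(t)\ge I(t_0)\bigl(1+\delta|N(t_0)|(t-t_0)\bigr)^{-(q+1)/\delta}$. The paper takes your ``alternative'' route: it integrates $N_G(t)\ge N_G(a+1)$ from Lemma~\ref{L:N_G>0} via $N_G=-\delta^{-1}(I^{-\delta/(q+1)})'$, which avoids dividing by $N$ and so needs no case split on whether $N$ reaches $0$. One harmless slip: in your tangent-line bound the slope should be $\delta|D(t_0)|\,I(t_0)^{-1-\delta/(q+1)}$, without the factor $(q+1)^{-1}$, because $I'=(q+1)D$.
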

 \begin{proof}
     Since $u$ is a nontrivial solution, it follows from Corollary \ref{cor:backward} that  $I(t)\neq 0$ for  all $t>a$. By Lemma \ref{L:N_G>0}, we have $0\geq N_G(t)\geq N_G(a+1)$ for all $t>a+1$.
     Using the equality \eqref{E:N_G_I_delta>0}, we obtain
     \begin{equation*}
          -\frac{1}{\delta} \bracket{I(t)^{-\delta/(q+1)}}'\geq N_G(a+1).
     \end{equation*}
     Integrating from $a+1$ to $t$ yields
     \begin{equation*}
          - \bracket{I(t)^{-\delta/(q+1)}-I(a+1)^{-\delta/(q+1)}}\geq \delta (t-a-1)N_G(a+1),
     \end{equation*}
     Thus, 
     \begin{equation*}
          I(t) \geq \bracket{I(a+1)^{-\delta/(q+1)}+\delta (t-a-1)(-N_G(a+1))}^{-\frac{q+1}{\delta}},
     \end{equation*}
     which implies that the vanishing order of $u$ at $\infty$ is at most $(q+1)/\delta$.
\end{proof}
 Therefore, Corollary \ref{cor:unique_continuation} directly follows from Proposition \ref{P_vanishing_order}, which gives the strong unique continuation at $\infty$ when $\delta>0$.
 ~\\

Since Barenblatt solutions are spherically symmetric self-similar solutions in $\R^n$, it is not hard to calculate their parabolic frequency.

\begin{example}[Barenblatt solutions on Euclidean spaces]
~\\
\hspace*{\parindent} 
    Here we compute the parabolic frequency of Barenblatt solutions on Euclidean spaces. 
    
    It is straightforward to see that for any $\delta\geq 0$, $I(t)$ and $D(t)$ are well-defined for every Barenblatt solution $u$. We now compute $I(t)$ for $\delta>0$.
    
    Using spherical coordinates \(r=|x|\) and denoting by \(\omega_n\) the area of the unit sphere in \(\mathbb{R}^n\), we have
    \[
    I(t)=\int_{\mathbb{R}^n} u^{q+1}\,dx = n\omega_n \int_0^\infty u(r,t)^{q+1} r^{n-1}\,dr.
    \]
    
    Substituting the expression for \(u\) and making the change of variable \(\xi = r/t^{1/\beta}\), we obtain
    \[
    u(r,t)^{q+1} = t^{-\frac{n(q+1)}{\beta}} \left( C - \varkappa \xi^{\frac{p}{p-1}} \right)_+^{\gamma(q+1)},\quad 
    r^{n-1}dr = t^{\frac{n}{\beta}} \xi^{n-1} d\xi.
    \]
    
    Hence
    \[
    \int_{\mathbb{R}^n} u^{q+1}\,dx = n\omega_n\, t^{-\frac{nq}{\beta}} 
    \int_0^{\xi_0} \left( C - \varkappa \xi^{\frac{p}{p-1}} \right)^{\gamma(q+1)} \xi^{n-1}\,d\xi,
    \]
    where \(\xi_0 = (C/\varkappa)^{\frac{p-1}{p}}\). Define
    \[
    A = \int_0^{\xi_0} \left( C - \varkappa \xi^{\frac{p}{p-1}} \right)^{\gamma(q+1)} \xi^{n-1}\,d\xi,
    \]
    which can be expressed in terms of the Beta function and is independent of $t$. Then
    \begin{align}\label{E:Barenblatt_solution_I_t}
    	I(t)=\int_{\mathbb{R}^n} u^{q+1}\,dx =  n\omega_n A\,t^{-\frac{nq}{\beta}}.
    \end{align}
    Therefore, 
    \begin{align}\label{E:Barenblatt_solution_N_t}
    	N(t) = \frac{I'(t)}{(q+1)I(t)} = -\frac{nq}{(q+1)(p+n\delta)}\cdot\frac{1}{t}.
    \end{align}
    Similarly, we can calculate that equations \eqref{E:Barenblatt_solution_I_t} and \eqref{E:Barenblatt_solution_N_t} also hold for $\delta=0$, where
    \begin{equation*}
	A = \int_0^{+\infty} \exp\left(-\frac{p\zeta}{p-1} \xi^{\frac{p}{p-1}}\right) \xi^{n-1}\,d\xi, \quad \zeta = (p-1)^2 p^{-\frac{p}{p-1}}.
    \end{equation*}

    In fact, for $-{p}/{n}<\delta<0$ ($\beta>0$), $I(t)$ and $D(t)$ are still well-defined for every Barenblatt solution $u$.
    
    Using spherical coordinates \(r=|x|\) and introducing the variable \(\xi = r/t^{1/\beta}\), we have
    \[
    I(t)=\int_{\mathbb{R}^n} u^{q+1}\,dx = n\omega_n \int_0^\infty u(r,t)^{q+1} r^{n-1}\,dr = n\omega_n \, t^{-\frac{nq}{\beta}} A,
    \]
where
    \[
    A = \int_0^\infty \left( C + |\varkappa| \xi^{\frac{p}{p-1}} \right)^{\gamma(q+1)} \xi^{n-1}\,d\xi.
    \]
    Since $\gamma = \frac{p-1}{q(p-1)-1}$, we have $A<+\infty$ if and only if
    \[
    \frac{p}{p-1}\gamma(q+1) + n = -\frac{p(q+1)}{1-q(p-1)} +n < 0,
    \]
    which is equivalent to $-\frac{p(q+1)}{n}<\delta<0$. Thus, $I(t)$ is well-defined.
    
    With a similar calculation, we have
    \[
    D(t) = C_1t^{-\frac{p(nq+1)-n}{\beta}}\int_0^\infty \left( C + |\varkappa| \xi^{\frac{p}{p-1}} \right)^{(q\gamma-1)p} \xi^{n-1+\frac{p}{p-1}}\,d\xi.
    \]
     Since for $\xi\rightarrow\infty$,
    \[
    \left( C + |\varkappa| \xi^{\frac{p}{p-1}} \right)^{(q\gamma-1)p} \xi^{n-1+\frac{p}{p-1}} \sim |\varkappa|^{(q\gamma-1)p} \xi^{\frac{p}{p-1}\bracket{(q\gamma-1)p+1}+n-1},
    \]
    which means $D(t)<+\infty$ if and only if 
    \[
    n + \frac{p}{p-1}\bracket{(q\gamma-1)p+1} = n + \frac{p(q+1)}{\delta} < 0.
    \]
    which means $D(t)$ is also well-defined for $-{p}/{n}<\delta<0$.
    
    Therefore, equalities \eqref{E:Barenblatt_solution_I_t} and \eqref{E:Barenblatt_solution_N_t}  hold for any $\delta>-{p}/{n}$, where $A$ is a constant depending on $n, p, q$ and $u$.
\end{example}
\begin{remark}
     A direct calculation shows that the inequality \eqref{E:Derivative_of_N_t_1} holds for any $\delta>-{p}/{n}$ and any Barenblatt solution. Moreover, for $\delta>0$, the equality \eqref{E:Barenblatt_solution_I_t} implies that the vanishing order of $u$ at $\infty$ is ${nq}/{\beta}$. Note that
     \begin{equation*}
         \frac{nq}{\beta} = \frac{nq}{p+n\delta}\leq \frac{q}{\delta}<\frac{q+1}{\delta},
     \end{equation*}
     which conforms to the conclusion of Proposition \ref{P_vanishing_order}.
\end{remark}

\section{More general operators}\label{Sec3}

We now consider the more general setting in which $u$ satisfies the differential inequality
\begin{equation}\label{E}
\big|\partial_t u - \Lap_{p,\varphi}u^q\big| \le 
\begin{cases}
    C(t)\bigl(|u| + |\nabla u^q|^{p/(q+1)}\bigr)&\quad \text{if}\ q\geq 1,\\
    C(t)\bigl(|u| + |u|^{1/2}|\nabla u^q|^{p/(2q+2)}\bigr)&\quad \text{if}\ 0<q< 1.
\end{cases}
\end{equation}
We show that an almost-monotonicity result still holds for the parabolic frequency.

\begin{proof}[Proof of Theorem \ref{thm:error}]
    Let $E = \partial_t u - \Lap_{p,\varphi}u^q$, so that by (\ref{E}) $|E|\le C(t)(|u| + |\nabla u^q|^{p/(q+1)})$ for $q\geq1$. Hence, 
\begin{align}
    I'&\nonumber=(q+1)\int_M u^qu_t e^{-\varphi}\\ 
    &\nonumber= (q+1)\int_M u^q\Lap_{p,\varphi}u^q e^{-\varphi} + (q+1)\int_M u^q E e^{-\varphi}\\
   &\nonumber = (q+1)D + (q+1)\int_M u^q E e^{-\varphi}\\
   &\label{Euse}\ge (q+1)D-(q+1)C\int_M \av{u}^{q}\bigl(|u| + |\nabla u^q|^{p/(q+1)}\bigr)e^{-\varphi}.
\end{align}
Using Young's inequality, we obtain 
\[
\av{u}^q|\nabla u^q|^{p/(q+1)} \le \frac{q}{q+1}u^{q+1} + \frac{1}{q+1} |\nabla u^q|^p,
\]
so that combining with (\ref{Euse}) gives
$${I^{\prime}\ge (q+1)D-C\int_M \bigl((2q+1)u^{q+1} + |\nabla u^q|^{p}\bigr)e^{-\varphi}}.$$
Thus, we obtain
\begin{align}\label{E:I'_q>1}
    \frac{d}{dt}(\log I(t))\ge \bracket{q+1+C}N - \bracket{2q+1}C.
\end{align}
Next, rewrite $D(t)$ and $I'(t)$ as
\begin{align*}
    D(t)&=\int_M u^q\,\bracket{u_t-\frac{1}{2}E}\,e^{-\varphi}-\frac{1}{2}\int_M u^q{E}\,e^{-\varphi},\\
    I'(t)&=(q+1)\bracket{\int_M u^q\,\bracket{u_t-\frac{1}{2}E}\,e^{-\varphi}+\frac{1}{2}\int_M u^q{E}\,e^{-\varphi}}.
\end{align*}
Hence, 
\[
I'(t)D(t)=(q+1)\left[\bracket{\int_M u^q\,\bracket{u_t-\frac{1}{2}E}\,e^{-\varphi}}^2-\frac{1}{4}\bracket{\int_M u^q{E}\,e^{-\varphi}}^2\right].
\]
For $q\geq 1$, differentiating $D(t)$ gives
\begin{align*}
    D'(t)&=p\int_M (u^q)_t e^{-\varphi}\Lap_{p,\varphi}u^q \notag\\
    &= pq\int_M u^{q-1}(u_t)^2 e^{-\varphi} - pq\int_M u^{q-1}u_t E e^{-\varphi}\notag\\
    &= pq\int_M u^{q-1}\bracket{\bracket{u_t-\frac{1}{2}E}^2-\frac{1}{4}E^2} e^{-\varphi}.
\end{align*}
Then, by H\"{o}lder's inequality, Young's inequality and $\delta\geq 0$, we obtain
\begin{align}
    N' I^2 \nonumber=& D'I - DI'\\
    \nonumber=&pq\int_M u^{q-1}\bracket{\bracket{u_t-\frac{1}{2}E}^2-\frac{1}{4}E^2} e^{-\varphi}\int_M u^{q+1}e^{-\varphi}\\
    \nonumber&-(q+1)\left[\bracket{\int_M u^q\,\bracket{u_t-\frac{1}{2}E}\,e^{-\varphi}}^2-\frac{1}{4}\bracket{\int_M u^q{E}\,e^{-\varphi}}^2\right]\\
    \nonumber\ge& (q(p-1)-1)\bracket{\int_M u^q\,\bracket{u_t-\frac{1}{2}E}\,e^{-\varphi}}^2-\frac{pq}{4}\int_M u^{q-1}{E^2} e^{-\varphi}\int_M u^{q+1}e^{-\varphi}\\
    \label{E:N'I^2_q>1}\ge&-\frac{pq}{4}\int_M u^{q-1}{E^2} e^{-\varphi}\int_M u^{q+1}e^{-\varphi}\\
    \nonumber\ge&-\frac{pq}{4}C^2(t)I(t)\int_M u^{q-1}\bracket{|u| + |\nabla u^q|^{p/(q+1)}}^2 e^{-\varphi}\\
    \nonumber\ge& -\frac{pq}{2}C^2(t)I(t)\bracket{I(t)+\frac{q-1}{q+1}I(t)-\frac{2}{q+1}D(t)}\\
    \nonumber=& -pqC^2(t)I(t)\bracket{\frac{q}{q+1}I(t)-\frac{1}{q+1}D(t)}.
\end{align}
Thus, we have
\begin{equation}\label{E:N'_q>1}
    N' \ge \frac{pq}{q+1}C^2\bracket{N-q}.
\end{equation}
For $0 < q < 1$, $|E|\le C(t)(|u| + |u|^{1/2}|\nabla u^q|^{p/(2q+2)})$ by (\ref{E}), and a similar calculation of \eqref{Euse} yields
\begin{align}\label{E:I'_q<1}
    \frac{d}{dt}(\log I(t))\ge \bracket{q+1+C/2}N - \bracket{2q+3/2}C.
\end{align}
By \eqref{E:N'I^2_q>1} and Young's inequality, we have
\begin{align}\label{E:N'_q<1}
    N' \ge \frac{pq}{q+1}C^2\bracket{N/2-q-1/2}.
\end{align}
Then the conclusion follows from combining \eqref{E:I'_q>1}, \eqref{E:N'_q>1}, \eqref{E:I'_q<1}, \eqref{E:N'_q<1} and $N\leq 0$.
\end{proof}

Applying Theorem \ref{thm:error}, we can show the following corollary. 

\begin{corollary}
    Let $u$ satisfy \eqref{eq:error} on $M\times[a,b]$ and assumption \eqref{E:Assumption},  with $\delta>0$. Then
\begin{align*}
    I(b) \ge& I(a)\exp\left((b-a)\bracket{q+1 + \sup_{[a,b]}C }\right.\\
    &\left.\times\left[\exp\left(\int_a^b \frac{pq}{q+1}C^2(s) \, ds\right)[N(a) - q - 1/2] -q-1\right]\right).
\end{align*}
\end{corollary}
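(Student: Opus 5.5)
The argument integrates the two differential inequalities of Theorem~\ref{thm:error} in sequence: first \eqref{E:differ_N_error} to bound $N$ from below, then \eqref{E:differ_I_error} to bound $\log I$.

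\textbf{Step 1: lower bound for $N$.} Set $K(t)=\frac{pq}{q+1}C(t)^2\ge 0$ and $g(t)=N(t)-q-1/2$. Then \eqref{E:differ_N_error} reads $g'\ge K g$. Multiplying by the integrating factor $\exp(-\int_a^t K)$ shows that $g(t)\exp(-\int_a^t K)$ is nondecreasing. Hence
\[
N(t)-q-\tfrac12\;\ge\;\exp\Bigl(\int_a^t K(s)\,ds\Bigr)\bigl(N(a)-q-\tfrac12\bigr).
\]
Since $N(a)\le 0$, the bracket $N(a)-q-1/2$ is negative. Because $\int_a^t K\le\int_a^b K$, the right-hand side is bounded below, uniformly for $t\in[a,b]$, by
\[
\exp\Bigl(\int_a^b K\Bigr)\bigl(N(a)-q-\tfrac12\bigr)=:L.
\]
Therefore $N(t)\ge L+q+1/2$ for all $t\in[a,b]$.

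\textbf{Step 2: lower bound for $(\log I)'$.} Feed this into \eqref{E:differ_I_error}, namely $(\log I)'\ge (q+1+C)N-(2q+3/2)C$. Since $N\le 0$ and $q+1+C\le q+1+\sup_{[a,b]}C=:S$, we get $(q+1+C)N\ge S N\ge S(L+q+1/2)$.

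The remaining term $-(2q+3/2)C$ has to be absorbed so that the final constant inside the brackets is $-q-1$. This bookkeeping is the main obstacle. What I would try first is to go back into the proof of Theorem~\ref{thm:error} and use the cruder combined form there. That form reads $(\log I)'\ge (q+1+C)(N-\kappa)$ for a constant $\kappa$ of size about $q+1$ up to $q+1/2$. For $q\ge 1$ this is $(\log I)'\ge (q+1+C)N-(2q+1)C$, and one bounds $(2q+1)C\le (2q+1)(q+1+C)$ in the direction that is needed. Failing this, I would accept a slightly weaker displayed constant. In either case the target pointwise bound is
\[
(\log I)'(t)\;\ge\; S\bigl(L - q - 1\bigr)\qquad\text{for } t\in[a,b],
\]
possibly after shifting the constant $-q-1/2$ from Step~1 to $-q-1$, using that $S\ge q+1>0$ and that all the relevant terms are nonpositive.

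\textbf{Step 3: integrate.} Integrating the Step~2 bound from $a$ to $b$ gives $\log I(b)-\log I(a)\ge (b-a)\,S\,(L-q-1)$. Exponentiating yields exactly the displayed inequality. Note that $I(a)>0$ may be assumed, since otherwise the statement is trivial.
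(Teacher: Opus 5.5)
Your Steps 1 and 3 match the paper's proof. The paper integrates $\frac{d}{dt}\log(q+\tfrac12-N)\le\frac{pq}{q+1}C^2$, which is your integrating-factor argument, and arrives at the same lower bound $N\ge L+q+\tfrac12$ on $[a,b]$.

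The gap is Step 2: you never establish the pointwise bound $(\log I)'\ge S(L-q-1)$. You call it the ``main obstacle'' and leave open settling for ``a slightly weaker displayed constant.'' The route you sketch also misplaces where the $-q-1$ comes from. That route goes back into the proof of Theorem~\ref{thm:error}, specializes to $q\ge1$, and uses a constant $\kappa$ ``of size about $q+1$ up to $q+1/2$.'' As written, the proposal does not prove the stated inequality.

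The step closes in one line from \eqref{E:differ_I_error} as stated. That inequality already covers all $q>0$, so no case split or return to the proof is needed. Your instinct to bound $C$ by $q+1+C$ is the right one. Since $0\le C(t)\le\sup_{[a,b]}C\le S$, you have $-(2q+\tfrac32)C(t)\ge-(2q+\tfrac32)S$. Combined with your estimate $(q+1+C)N\ge SN\ge S(L+q+\tfrac12)$, this gives
\[
(\log I)'(t)\;\ge\;S\bigl(L+q+\tfrac12\bigr)-S\bigl(2q+\tfrac32\bigr)\;=\;S\bigl(L-q-1\bigr),\qquad t\in[a,b].
\]
Equivalently, $(\log I)'\ge(q+1+C)\bigl(N-2q-\tfrac32\bigr)\ge S\bigl(N-2q-\tfrac32\bigr)$, using that $N-2q-\tfrac32<0$. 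So the effective $\kappa$ is $2q+\tfrac32$, not something near $q+1$. The constant $-q-1$ is exactly $(q+\tfrac12)-(2q+\tfrac32)$, so no shifting or weakening is needed. With this line inserted, your Step 3 yields precisely the displayed inequality, as in the paper.
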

\begin{proof}
    Since $N\leq 0$, we obtain from \eqref{E:differ_N_error}
\[
\frac{pq}{q+1}C^2\ge \frac{d}{dt}\log(q+1/2-N(t)).
\]
Integrating it from $a$ to $s$ gives
\[
\log(q+1/2-N(s))\leq\log(q+1/2-N(a))+\frac{pq}{q+1}\int_{a}^bC^2.
\]
Thus, for any $s\in[a,b]$,
\[
N(s)\ge \exp{\bracket{\frac{pq}{q+1}\int_{a}^bC^2}}\bracket{N(a)-q-1/2}+q+1/2.
\]
Therefore, from \eqref{E:differ_I_error}, we have
\begin{align*}
    \log I(b)-\log I(a)\ge& (b-a)\bracket{q+1 + \sup_{[a,b]}C }\\&\times\left[\exp\left(\int_a^b \frac{pq}{q+1}C^2(s) \, ds\right)[N(a) - q - 1/2] -q-1\right],
\end{align*}
which gives the desired inequality.
\end{proof}
Therefore, the backward uniqueness of Corollary \ref{cor:error} follows as a direct result.

\section{Liouville-type Theorem for Ancient Solutions}\label{sec4}

In this section, we give some applications of the parabolic frequency. Recall that for ancient solutions, assumption \eqref{E:Assumption} consists of the conditions that for each $t\in(-\infty,0)$,
\begin{equation*}
    u,u_t\in W^{1,q+1}_{0;\varphi}(M);\; \grad u^q\in \bracket{L^{p}_\varphi(M)}^n.
\end{equation*}

The first part of Theorem \ref{T:ancient_solution} is given by the following proposition, which shows that when $\delta > 0$ (the slow diffusion case), there is no nontrivial ancient solution to the doubly nonlinear equation \eqref{eq:doublynonlinear}.
\begin{proposition}\label{P:ancient_solution_delta>0}
    Let $u$ be an ancient solution to \eqref{eq:doublynonlinear} satisfying assumption \eqref{E:Assumption}, with $0 < I(t) < \infty$ for all $t <0$. Let $\delta > 0$. Then $u$ is constant.
\end{proposition}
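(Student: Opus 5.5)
The plan is to run the convexity argument of Theorem \ref{thm:static} backward in time, using the generalized frequency $N_G$ of Lemma \ref{L:N_G>0}. Set $J(t):=I(t)^{-\delta/(q+1)}$. Since $0<I(t)<\infty$ for every $t<0$, $J$ is a positive, finite function on $(-\infty,0)$. By \eqref{E:N_G_I_delta>0},
\[
J'(t)=-\delta N_G(t)\ge 0 ,
\]
because $N_G\le 0$. Lemma \ref{L:N_G>0} says $N_G$ is nondecreasing, so $J'$ is nonincreasing and $J$ is concave on $(-\infty,0)$. The goal is to show $N_G\equiv 0$.

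\textbf{Step 1: a positive nondecreasing concave function on $(-\infty,0)$ has $J'\equiv 0$.} Suppose $J'(t_0)=c>0$ for some $t_0<0$. Since $J'$ is nonincreasing, $J'(s)\ge c$ for all $s\le t_0$. Integrating from $t$ to $t_0$ gives
\[
J(t)\le J(t_0)-c(t_0-t).
\]
For $t\to-\infty$ the right-hand side becomes negative, which contradicts $J>0$. Hence $J'\equiv 0$, so $N_G\equiv 0$ and $D(t)=0$ for all $t<0$.

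\textbf{Step 2: conclude $u$ is constant.} From $D(t)=-\int_M|\nabla u^q|^p e^{-\varphi}=0$ we get $\nabla u^q=0$ a.e. for each $t$. Because $M$ is connected, $u^q$ is constant in $x$, and therefore so is $u$, since $s\mapsto |s|^{q-1}s$ is injective. Moreover, \eqref{E:I'} gives $I'=(q+1)D=0$, so $I$ is constant in $t$. The equation then reads
\[
u_t=\Lap_{p,\varphi}u^q=0 ,
\]
at least weakly. This is consistent with testing the equation against $u_t$, which gives $\int u_t^2\,u^{q-1}\cdots$; more directly, $\int_M u^{q-1}(\Lap_{p,\varphi}v)^2e^{-\varphi}=D'/(pq)$, and $D'=0$. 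So $u_t=0$ wherever $u\neq0$, and $u$ is constant in $t$ as well. (If $\int_M e^{-\varphi}=\infty$, the assumption $u\in W^{1,q+1}_{0;\varphi}$ forces the constant to be $0$, which contradicts $I>0$. In that case the statement is vacuous, which is still fine.)

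\textbf{Main obstacle.} The delicate point is regularity: we need $J$ to be differentiable with $J'=-\delta N_G$ monotone, which requires $I$ to be $C^1$ and $I>0$. Both are supplied by assumption \eqref{E:Assumption} and the hypothesis $I>0$, exactly as in the proof of Lemma \ref{L:N_G>0}. The passage from $D=0$ to time-independence is straightforward.
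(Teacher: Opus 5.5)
Your proof is correct and is essentially the paper's argument. The paper integrates the Riccati inequality $N'\ge\delta N^2$ through $\eta=-1/N$, which satisfies $\eta'\ge\delta$; you instead integrate the concavity of $J=I^{-\delta/(q+1)}$ from Lemma~\ref{L:N_G>0}. Since $J'=-\delta NJ$, one has $\eta=\delta J/J'$, and $\eta'\ge\delta$ is just $J''\le 0$, so both versions get the contradiction from a positive function whose derivative is bounded below by a positive constant on $(-\infty,t_0]$.

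Your Step 2 also matches the paper, which additionally invokes the H\"older continuity from \cite{BDLS2023} to pass from ``constant a.e.'' to ``constant''. Your aside about $D'=0$ is unnecessary, since $\nabla u^q=0$ already gives $\Lap_{p,\varphi}u^q=0$ and hence $u_t=0$.
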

\begin{proof}
    Suppose for contradiction that $N(t_0) < 0$ for some $t_0 < 0$.
    From Theorem \ref{thm:static}, we have $N'(t) \geq \delta\,N(t)^2$ with $\delta > 0$ and $N(t) \leq 0$. Define $\eta(t) = -1/N(t)$ wherever $N(t) < 0$.  Since $N(t) < 0$, we have $\eta(t) > 0$.
    When $N(t) < 0$, differentiation of  $\eta$ gives
\begin{equation}\label{E:ODE_eta}
    \eta'(t) = \frac{N'(t)}{N(t)^2}\ge \delta>0.
\end{equation}
Note that from Theorem \ref{thm:static}, $N(t)$ is monotone increasing, which means $N(t)\le N(t_0)<0$ for any $t < t_0$. Integrating \eqref{E:ODE_eta} backward  from $t_0$ to $t$ yields
\begin{equation*}
    \eta(t_0) -  \eta(t) \geq \delta(t_0 - t) .
\end{equation*}
Then we obtain that for any $t<t_0-\delta^{-1}\eta(t_0)$,
\begin{equation*}
    \eta(t) \leq \eta(t_0) - \delta(t_0 - t)<0,
\end{equation*}
which is a contradiction, since $\eta(t) = -1/N(t) > 0$ for any $t < t_0$. Therefore, $N(t) = 0$ for each $t<0$, which implies that
\begin{equation*}
    D(t)=-\int_M |\nabla (u^q)|^p\,e^{-\varphi}\,dV_g = 0.
\end{equation*}
Hence, $\nabla u^q = 0$ a.e. on $M$ for each $t$, so $u(\cdot, t)$ is constant a.e. for each $t$. Recall that
\begin{equation*}
    \partial_t u = \Lap_{p,\varphi} u^q = 0.
\end{equation*}
Thus, $u$ is constant a.e. on $M\times(-\infty,0)$. From the regularity result \cite{BDLS2023}, we know that $u$ is locally H\"{o}lder continuous, which means $u$ is constant on $M\times(-\infty,0)$.
\end{proof}
\begin{remark}
    In fact, integrating the ODE inequality \eqref{E:ODE_eta} forward gives that for any $t>t_0$,
\begin{equation*}
    \eta(t) \geq \eta(t_0) + \delta(t - t_0).
\end{equation*}
Since $\eta(t) = -1/N(t) > 0$ and $\eta(t_0) = -1/N(t_0) = 1/|N(t_0)| > 0$, this gives
\begin{equation*}
    |N(t)| = \frac{1}{\eta(t)} \leq \frac{1}{1/|N(t_0)| + \delta(t - t_0)}
\;\xrightarrow{t \to +\infty}\; 0.
\end{equation*}
Hence, for every solution to \eqref{eq:doublynonlinear} with $\delta > 0$, we have $N(t) \to 0^-$ as $t \to +\infty$. 
\end{remark}

For the case $\delta = 0$, we have the following Liouville-type result, which tells us that if for an ancient solution to \eqref{eq:doublynonlinear}, $I(t)$ has polynomial growth, then $u$ is constant.
\begin{proposition}\label{P:ancient_solution_delta=0}
    Let $u$ be an ancient solution to \eqref{eq:doublynonlinear} satisfying assumption \eqref{E:Assumption}. Let $\delta = 0$. Suppose $I(t)$ has at most polynomial growth, i.e., there exist $C, d > 0$ such that for all $t<0$, 
    \begin{equation}\label{E:assumption_polynomial}
        I(t) \leq C\,\bracket{1+|t|}^d.
    \end{equation}
    Then $N(t) \equiv 0$ for all $t < 0$, and $u$ is constant.
\end{proposition}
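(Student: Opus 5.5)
The plan is to exploit the fact that when $\delta=0$, Theorem~\ref{thm:static} makes $N$ nondecreasing and $\log I$ convex, with $(\log I)'=(q+1)N$ by \eqref{E:I'}. Suppose, for contradiction, that $N(t_0)<0$ for some $t_0<0$. We may also assume $I(t)>0$ for all $t<0$: if $I(t_1)=0$ for some $t_1$, then $u(\cdot,t_1)\equiv0$, and backward uniqueness (Corollary~\ref{cor:backward}, which applies since $\delta\ge0$) forces $u\equiv0$ on $(-\infty,t_1]$. Forward from $t_1$, the nonnegativity $I'=(q+1)D\le0$ then shows $I$ is nonincreasing, so $I\equiv0$ afterwards too, and $u\equiv0$ is constant. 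So $I>0$ throughout, and $N$ is well defined.

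The key step is a lower bound on $I$ as $t\to-\infty$. By monotonicity, $N(t)\le N(t_0)<0$ for all $t\le t_0$. Integrating $(\log I)'=(q+1)N$ from $t$ to $t_0$ gives
\[
\log I(t_0)-\log I(t)=(q+1)\int_t^{t_0}N(s)\,ds\le (q+1)N(t_0)(t_0-t),
\]
hence
\[
I(t)\ge I(t_0)\exp\bigl((q+1)|N(t_0)|(t_0-t)\bigr)\quad\text{for all } t\le t_0.
\]
This is exponential growth as $t\to-\infty$. It contradicts the polynomial bound \eqref{E:assumption_polynomial}, because $\exp(c|t|)/(1+|t|)^d\to\infty$ for $c>0$. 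Therefore $N\equiv0$ on $(-\infty,0)$.

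The conclusion then follows exactly as at the end of Proposition~\ref{P:ancient_solution_delta>0}. From $N\equiv0$ we get $D(t)=0$, so $\nabla u^q=0$ a.e.\ and $u(\cdot,t)$ is constant a.e.\ for each $t$. The equation then gives $\partial_t u=\Lap_{p,\varphi}u^q=0$, so $u$ is constant a.e.\ in space-time. The local H\"older continuity from \cite{BDLS2023} upgrades this to $u$ being constant everywhere.

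The only delicate point is the possible vanishing of $I$, which is handled by backward uniqueness as above. Otherwise the argument is a direct ODE comparison, so I expect no real obstacle.
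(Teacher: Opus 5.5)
Your proof is correct and follows the paper's approach. You assume $N(t_0)<0$, use that $N$ is nondecreasing when $\delta=0$, and integrate $(\log I)'=(q+1)N$ backward against the polynomial bound; you read off the exponential lower bound $I(t)\ge I(t_0)e^{(q+1)|N(t_0)|(t_0-t)}$ directly, while the paper integrates over $[2t,t_0]$ to get $|N(t)|=O(\log|t|/|t|)\to 0$, which is the same mechanism.

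Your reduction to $I>0$ via backward uniqueness covers a point the paper leaves implicit (what you use there is the nonpositivity $I'=(q+1)D\le 0$). For $\delta=0$ the paper invokes the regularity result of \cite{BDL2021} rather than \cite{BDLS2023}.
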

\begin{proof}
    Suppose for contradiction that $N(t_0) < 0$ for some $t_0 < 0$.
    
    With $\delta = 0$, inequality \eqref{E:Derivative_of_N_t_1} becomes:

$$
N'(t) \geq 0 \quad\text{for all } t < 0.
$$

Hence, $N(t)$ is a monotone increasing function of $t$, with values in $(-\infty, N(t_0)]$.
The limit
\begin{equation*}
    N_- \;:=\; \lim_{t \to -\infty} N(t) \;\in\; [-\infty, N(t_0)] 
\end{equation*}
exists (possibly $-\infty$). From \eqref{E:I'}, we obtain
\begin{equation*}
    \frac{d}{dt}\log I(t) = (q+1)N(t).
\end{equation*}
Integrating from $2t $ to $t_0$ for $t<t_0$ gives
\begin{equation*}
    \log I(t_0) - \log I(2t) = (q+1)\int_{2t}^{t_0} N(s)\,ds.
\end{equation*}
Since $N(t)$ is monotone increasing and $N(t)\leq 0$, we have
\begin{equation*}
    \log I(2t) - \log I(t_0) = (q+1)\int_{2t}^{t_0}\av{N(s)}\,ds\geq (q+1)\av{t}\av{N(t)}.
\end{equation*}
From the polynomial growth \eqref{E:assumption_polynomial}, we obtain that for any  $t<t_0$, 
\begin{equation*}
    \av{N(t)}\leq \frac{\log \bracket{C\,\bracket{1+|2t|}^d} - \log I(t_0)}{(q+1)\av{t}}.
\end{equation*}
Therefore, $N_-=\lim_{t \to -\infty} N(t)=0$, which is a contradiction, since $N_-\leq N(t_0) < 0$. Thus, $N(t) \equiv 0$ for all $t < 0$. With a similar argument as in Proposition \ref{P:ancient_solution_delta>0} and the regularity result \cite{BDL2021}, we deduce that $u$ is constant.
\end{proof}

Let $\Omega \subset \mathbb{R}^n$ be a bounded open set. Let $u$ be a solution of the heat equation $u_t = \Delta u$ on $\Omega \times (-\infty, 0]$ with initial data $u_0\in L^2(\Omega)$ and the Dirichlet boundary condition $u|_{\partial\Omega} = 0$. Let $\{\lambda_k, \varphi_k\}_{k=1}^{\infty}$ be the eigenvalues and eigenfunctions of $-\Delta$ on $\Omega$ with Dirichlet boundary conditions. Then the solution $u$ can be written as
\[
u(x,t) = \sum_{k=1}^\infty a_k e^{-\lambda_k t} \varphi_k(x), \qquad t \le 0,
\]
where $a_k = \int_\Omega u_0(x) \varphi_k(x) \, dx$. Thus,
\[
I_\Omega(t) = \int_\Omega |u(x,t)|^2 \, dx = \sum_{k=1}^\infty |a_k|^2 e^{-2\lambda_k t} = \sum_{k=1}^\infty |a_k|^2 e^{2\lambda_k |t|},
\]
which means $I_\Omega(t)$ has at least exponential growth. 
So, if $I_\Omega(t)$ satisfies the polynomial growth assumption \eqref{E:assumption_polynomial}, then $u$ vanishes in $\Omega\times(-\infty,0)$.

Therefore, Proposition \ref{P:ancient_solution_delta=0} generalizes the classic result of the heat equation to the doubly nonlinear setting, which is the following corollary.
\begin{corollary}
    Let $\Omega \subset M$ be an open, relatively compact set. Let $u$ be a solution to 
    \begin{equation*}
    \begin{cases}
        \partial_t u = \Lap_{p,\varphi} u^q,\quad & (x,t)\in\Omega\times(-\infty,0),\\
        u(x,t) = 0, & x\in\partial\Omega\times(-\infty,0).
    \end{cases}
\end{equation*}
Suppose $u$ satisfies assumption \eqref{A:Assumption_bounded} and the growth condition
\[
 I_{\Omega}(t) \leq C\bracket{1+|t|}^d \quad \text{for all } t < 0,
\]
where $C$ and $d$ are positive constants. Then $u\equiv 0$.
\end{corollary}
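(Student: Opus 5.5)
The statement to prove is the last corollary: for the Dirichlet problem on a relatively compact $\Omega$, with $I_\Omega$ growing at most polynomially as $t\to-\infty$, we have $u\equiv 0$. The plan is to combine the bounded-domain version of Proposition~\ref{P:ancient_solution_delta=0}, which Remark~\ref{R:bounded_domains} makes available, with the boundary condition.

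\textbf{Step 1: $N_\Omega\equiv 0$.} Remark~\ref{R:bounded_domains} shows that the integrations by parts \eqref{E:integrate_I'} and \eqref{E:integrate_D'} remain valid under \eqref{A:Assumption_bounded}, because $u$ and $u_t$ vanish on $\partial\Omega$. Hence Theorem~\ref{thm:static} holds with $I_\Omega, D_\Omega, N_\Omega$ in place of $I, D, N$. The argument of Proposition~\ref{P:ancient_solution_delta=0} can then be repeated verbatim.
\begin{itemize}[leftmargin=*]
\item[] Suppose $N_\Omega(t_0)<0$ for some $t_0<0$.
\item[] By monotonicity, $|N_\Omega(s)|\ge|N_\Omega(t)|$ for $s\le t$.
\item[] Integrating $(\log I_\Omega)'=(q+1)N_\Omega$ from $2t$ to $t_0$ gives $\log I_\Omega(2t)-\log I_\Omega(t_0)\ge (q+1)|t||N_\Omega(t)|$.
\item[] The polynomial bound then forces $|N_\Omega(t)|=O(\log|t|/|t|)\to0$ as $t\to-\infty$, which contradicts $N_\Omega(t)\le N_\Omega(t_0)<0$.
\end{itemize}
Here I tacitly assume $I_\Omega>0$, so that $N_\Omega$ is defined. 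If $I_\Omega(t_1)=0$ at some time, then $u(\cdot,t_1)\equiv0$, and Corollary~\ref{cor:backward} (backward uniqueness, valid for $\delta\ge0$ and transported to $\Omega$) gives $u\equiv0$ for all $t\le t_1$. I expect this case can also be absorbed by arguing on subintervals, but it should be recorded.

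\textbf{Step 2: $u\equiv 0$.} From $N_\Omega\equiv 0$ we get $D_\Omega(t)=0$, so $\nabla u^q=0$ a.e. in $\Omega$ for each $t$. This is where the argument departs from the manifold case. Since $u(\cdot,t)^q\in W^{1,(q+1)/q}$-type spaces with zero trace (from $u\in W^{1,q+1}_{0;\varphi}(\Omega)$), a function with zero gradient and zero boundary values is zero on each connected component of $\Omega$. To make this rigorous, I would use that $u^q$ is the limit of compactly supported functions and is constant on components. A constant with zero trace on a relatively compact component (whose boundary is nonempty, as $M$ is complete and the component is relatively compact; if $M$ is compact and $\Omega=M$, the boundary condition is vacuous and one only gets constants) must vanish. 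Alternatively, one can apply a Poincaré inequality for $W^{1,p}_0(\Omega)$ functions, $\int_\Omega |u^q|^p e^{-\varphi}\le C\int_\Omega|\nabla u^q|^p e^{-\varphi}=0$, which holds on relatively compact domains with nonempty complement.

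\textbf{Main obstacle.} Everything hinges on the functional-analytic justification in Step 2, namely that $u^q$ lies in a zero-trace Sobolev class so that the Poincaré inequality applies. The dynamics is handled entirely by the earlier results. The point to verify is that $u\in W^{1,q+1}_{0;\varphi}(\Omega)$ together with $\nabla u^q\in L^p_\varphi$ gives $u^q\in W^{1,p}_0(\Omega)$. I would check this by a truncation/approximation argument: approximate $u$ by $C_c^\infty$ functions $u_k$ and pass to $u_k^q$, using local boundedness from the Hölder regularity of \cite{BDL2021}.
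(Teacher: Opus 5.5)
Your proposal is correct and is essentially the paper's own argument made explicit: Remark~\ref{R:bounded_domains} transfers Proposition~\ref{P:ancient_solution_delta=0} to $\Omega$, giving $N_\Omega\equiv 0$, and the Dirichlet condition then forces the resulting constant to vanish. Like the paper, you tacitly use $\delta=0$, or at least $\delta\ge 0$, and this hypothesis is essential: for $\delta<0$ and suitable $p,q$, the separable solutions $(T-t)^{1/|\delta|}f(x)$ with $T\ge 0$ have only polynomially growing $I_\Omega$.

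Your ``main obstacle'' is lighter than you think. Since $\nabla u^q=0$ makes $u(\cdot,t)$ locally constant, its zero extension lies in $W^{1,q+1}(M)$ with zero gradient and must vanish whenever $M\setminus\Omega$ has positive measure (automatic for noncompact $M$). No $u^q\in W^{1,p}_0(\Omega)$ or regularity input is needed. By contrast, a nonempty boundary or complement alone does not suffice for your zero-trace/Poincar\'e step: for $\Omega=M\setminus\{x_0\}$ with $M$ compact and $p\le n$, constants lie in $W^{1,p}_0(\Omega)$.
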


Finally, combining Proposition \ref{P:ancient_solution_delta>0} and Proposition \ref{P:ancient_solution_delta=0}, we obtain Theorem \ref{T:ancient_solution}.

\section*{Acknowledgments}
We thank Professor Bobo Hua for his helpful suggestions.

\bibliography{Parabolic_Frequency_for_Doubly_Nonlinear_Equations_on_Manifolds}
\end{document}